\renewcommand{\footnote}{\endnote}
\newtheorem{theorem}{Theorem}[section]
\newtheorem{lemma}[theorem]{Lemma}
\newtheorem{proposition}[theorem]{Proposition}
\newtheorem{corollary}[theorem]{Corollary}
\theoremstyle{definition}
\newtheorem{definition}[theorem]{Definition}
\newtheorem{remark}[theorem]{Remark}
\begin{document}
\title{Entropy of polyhedral billiard}

\author{Nicolas Bédaride\footnote{ Laboratoire d'Analyse Topologie et Probabilités  UMR 7353 , Université Aix Marseille, avenue escadrille Normandie Niemen 13397 Marseille cedex 20, France. nicolas.bedaride@univ-amu.fr}}
\date{}

\maketitle
\begin{abstract}
We consider the billiard map in a convex polyhedron of
$\mathbb{R}^3$, and we prove that it is of zero topological entropy.
\end{abstract}

\section{Introduction}
A billiard ball, i.e.\ a point mass, moves inside a polyhedron $P$
with unit speed along a straight line until it reaches the
boundary $\partial{P}$, then it instantaneously changes direction
according to the mirror law, and continues along the new line.

Label the faces of $P$ by symbols from a finite alphabet
$\mathcal{A}$ whose cardinality equals the number of faces of $P$.
Consider the set of all billiard orbits. After coding, the set of all the words is 
a language. We define the complexity of the language, $p(n)$, by the number of
words of length $n$ that appears in this system. 
How complex is the game of billiard inside a polygon or a polyhedron?
For the cube the computations have been done, see \cite{ moi1, moi2}, but there is no result for a general polyhedron.
One way to answer this question is to compute the topological entropy of the billiard map. 

There are three different proofs that polygonal billiard have zero
topological entropy \cite{Ka,Ga.Kr.Tr,Gu.Ha}. Here we consider the
billiard map inside a polyhedron. 
We want to compute the topological entropy of the billiard map in a polyhedron.
The idea is to improve the proof of Katok. Thus we must compute the metric entropy of each ergodic measure.
When we follow this proof some difficulties appear. In particular
a non atomic ergodic measure for the related shift can
have its support included in the boundary of the definition set.
Such examples were known for some piecewise isometries of
$\mathbb{R}^2$ since the works of Adler, Kitchens and Tresser
\cite{Ad.Ki.Tr}; Goetz and Poggiaspalla \cite{Go,Go.Pog}.  Piecewise isometries and billiard are related since the first return map of the directional billiard flow inside a rational polyhedron is a piecewise isometry.

Our main result is the following
\begin{theorem}\label{entro}
Let $P$ be a convex polyhedron of $\mathbb{R}^3$ and let $T$ be the billiard map, then $$h_{top}(T)=0.$$
\end{theorem}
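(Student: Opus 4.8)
The plan is to prove the theorem through the variational principle, reducing the topological statement to a measure-theoretic one. By the variational principle $h_{top}(T)=\sup_\mu h_\mu(T)$, the supremum ranging over $T$-invariant Borel probability measures, and since metric entropy is affine over the ergodic decomposition it suffices to show $h_\mu(T)=0$ for every ergodic $\mu$. I fix such a $\mu$. Taking $\mathcal{P}=\{P_a:a\in\mathcal{A}\}$ to be the partition of phase space according to the face that is hit, the face label together with the reflection law determines the future, so $\mathcal{P}$ is generating and $h_\mu(T)=h_\mu(T,\mathcal{P})=\lim_{n}\tfrac1n H_\mu\bigl(\bigvee_{i=0}^{n-1}T^{-i}\mathcal{P}\bigr)$. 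Everything is then reduced to showing that this measured complexity is sublinear in $n$.

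The geometric heart of the argument exploits the flatness of the faces and the convexity of $P$. An atom of $\bigvee_{i=0}^{n-1}T^{-i}\mathcal{P}$ is the set of phase points whose orbit hits a prescribed word of $n$ faces. I would estimate how a fixed small \emph{coherent beam} of phase points, a connected family of trajectories with nearly constant direction between collisions, can meet these atoms: between two reflections the beam travels along parallel segments, and a flat face reflects it by a fixed isometry without any focusing or defocusing, so the only spreading is the linear one coming from the flight length. Consequently the edge-and-vertex skeleton of $P$ can subdivide a coherent beam only in a controlled, at most polynomial, fashion as $n$ grows, convexity of $P$ forbidding the recombination of separated sub-beams. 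This is the three-dimensional analogue of Katok's counting of orbit types for polygons, and on the part of phase space covered by finitely many such beams it yields $H_\mu\bigl(\bigvee_{i=0}^{n-1}T^{-i}\mathcal{P}\bigr)=o(n)$.

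The reason the estimate must be carried out against $\mu$ rather than topologically is the genuinely three-dimensional phenomenon that the total number of face-words of length $n$ may already grow exponentially; the measure, however, concentrates on far fewer of them, and the beam count above is meant to bound the $\mu$-entropy, not the cardinality of atoms. The main obstacle is precisely the one announced in the introduction: an ergodic $\mu$ may be supported in the boundary of the domain of definition of $T$, that is, on orbits that forever shadow the edge and vertex skeleton. There the coherent beams collapse onto lower-dimensional sets and the subdivision estimate degenerates, so the interior argument does not apply and this case carries the real difficulty.

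To treat it I would argue by induction on the dimension of the support. The closure of the singular set decomposes into finitely many invariant pieces of strictly smaller dimension, on each of which the induced first-return map is again of polyhedral-billiard, hence piecewise-isometric, type in one fewer degree of freedom; the base case is the polygon, where zero entropy is Katok's theorem, and the Abramov formula relates the entropy of the induced map to $h_\mu$. Combining the interior beam estimate with the inductive control of the boundary-supported part gives $h_\mu(T)=0$ for every ergodic $\mu$, whence $h_{top}(T)=0$ by the variational principle. The delicate points, which I expect to be the crux, are making the beam-subdivision bound uniform enough to integrate against $\mu$ and verifying that passage to the boundary genuinely lowers the dimension so that the induction closes.
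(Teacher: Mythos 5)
Your reduction step is already flawed in two ways. First, you invoke the variational principle for $T$ itself, but $T$ is discontinuous and not defined on all of a compact metric space, so $h_{top}(T)=\sup_\mu h_\mu(T)$ is unjustified; the paper explicitly warns that the variational principle cannot be applied to $T$ and that knowledge of $h_\mu(T)$ does not allow one to compute $h_{top}(T)$. Here $h_{top}(T)$ is \emph{defined} as the exponential growth rate of the number of $n$-cells (Definition \ref{htop}), identified via Lemma \ref{entrodef} with $h_{top}(S|\Sigma)$ for the subshift on the closure $\Sigma$ of the coding space, and the variational principle is then legitimately applied to $S|\Sigma$; the ergodic measures one must handle are measures of the \emph{shift}, not of $T$. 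Second, your claim that the face partition $\mathcal{P}$ is generating is false: by Lemma \ref{dim} whole strips and tubes of parallel orbits carry the same coding, so $\phi$ is not injective on $E_0$ and $h_\mu(T,\mathcal{P})$ need not compute $h_\mu(T)$. Both defects point the same way: the argument has to be run on the symbolic system, where the problematic measures are those supported on $\Sigma\setminus\phi(E_0)$, i.e.\ on sequences that are limits of codings but need not code any actual billiard orbit.

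It is exactly there --- the case you yourself identify as carrying the real difficulty --- that your proposal has no workable argument. Your induction on the dimension of the support presupposes that the boundary-supported dynamics decomposes into invariant pieces on which an induced first-return map is ``of polyhedral-billiard type in one fewer degree of freedom''; nothing of the sort is established or even defined: an orbit forever shadowing the edge skeleton of a polyhedron is not a polygonal billiard, and without such an identification the appeal to Abramov's formula has nothing to act on. (The paper's closing remark in the introduction, that in dimension $d$ at least $d-1$ distinct cases arise for which no method is available, is evidence that no such clean induction is known to close.) What the paper actually proves is a set of concrete geometric facts: convexity forces $\sigma_v^-$ to be a point or an interval for boundary words (Lemma \ref{entro3gkt}); an algebraic study of lines meeting pairs and triples of edges (Lemmas \ref{entro3eq}, \ref{entrogeom}, \ref{eqdim3} and Corollary \ref{entro3inde}) shows that either all discontinuities met by the orbit lie in finitely many planes and surfaces $S(A_0,A_1,A_2)$, in which case the future determines the past up to $O(n^N)$ choices and $h_\mu(S)=0$ (Lemma \ref{3entrosur}), or the direction is pinned down by two independent equations and lies in a countable set, so such words form a countable set (Proposition \ref{entroscruc}) supporting only atomic measures. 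Your interior ``coherent beam'' estimate is likewise only asserted, and even if granted, a polynomial subdivision count for individual beams does not bound $H_\mu\bigl(\bigvee_{i=0}^{n-1}T^{-i}\mathcal{P}\bigr)$ without uniformity over a full-measure family of beams; the paper's interior case (Lemma \ref{lemka}) is not a counting argument at all, but Katok's invariance argument, using ergodicity and the strip-width function $L$ to show $S\xi^-=\xi^-$ almost everywhere, whence $h_\mu=0$.
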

\begin{corollary}
The complexity of the billiard map satisfies $$\lim_{n\rightarrow+\infty}\frac{\log{p(n)}}{n}=0.$$
\end{corollary}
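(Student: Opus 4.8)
The plan is to deduce the theorem from the variational principle together with an entropy bound coming from the parabolic nature of flat billiards. By the variational principle, $h_{top}(T)=\sup_{\mu}h_{\mu}(T)$, the supremum being taken over all $T$-invariant Borel probability measures; since metric entropy is affine in $\mu$ and respects the ergodic decomposition, it suffices to prove that $h_{\mu}(T)=0$ for every ergodic $T$-invariant measure $\mu$. This is exactly the strategy of Katok's proof in dimension two, and the work consists in making it survive the passage to a polyhedron.

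First I would treat the measures that do not concentrate on the singular set. Let $\mathcal{S}$ denote the set of phase points whose trajectory meets an edge of $P$, so that $T$ is piecewise smooth on $M\setminus\mathcal{S}$. On each continuity domain $T$ is the composition of a free flight, which in a flat polyhedron is a unipotent (shear) map since parallel beams stay parallel, with a reflection, which is an isometry of directions; consequently the norm of the cocycle $DT^{n}$ grows at most linearly along any orbit avoiding $\mathcal{S}$. Applying Oseledets' theorem in the Katok--Strelcyn framework for maps with singularities, one obtains that all Lyapunov exponents of $\mu$ vanish, and the Margulis--Ruelle inequality then yields $h_{\mu}(T)\le\sum_{\lambda_i>0}\lambda_i=0$.

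The main obstacle is precisely the phenomenon announced in the introduction: an ergodic measure may have its support contained in the boundary of the domain of definition of $T$, that is, be carried by orbits that accumulate on the edge set. For such a measure the Katok--Strelcyn hypotheses (notably the logarithmic integrability of the distance to $\mathcal{S}$) may fail, so the inequality above is not available. To handle this case I would argue that the support of $\mu$ forces the trajectories to degenerate onto a lower-dimensional invariant object: the orbit is trapped in, or asymptotic to, a two-dimensional section of $P$ on which the induced dynamics is a polygonal billiard. The entropy of $\mu$ is then dominated by that of a polygonal billiard, which is zero by the results of Katok, Galperin--Kr\"uger--Troubetzkoy and Gutkin--Haydn \cite{Ka,Ga.Kr.Tr,Gu.Ha}. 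Establishing this reduction rigorously---controlling how orbits that hug the edges project onto planar billiards, and checking that no entropy is created in the limit---is where I expect the real difficulty to lie.

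Combining the two cases gives $h_{\mu}(T)=0$ for every ergodic $\mu$, whence $h_{top}(T)=0$ by the variational principle. The corollary then follows at once: labelling the faces of $P$ realises $T$ as an extension of the subshift whose language has complexity $p(n)$, and since the coding is injective off a set carrying no entropy, the topological entropy of that subshift equals $h_{top}(T)$. For a subshift the topological entropy is exactly $\lim_{n\to\infty}\frac{\log p(n)}{n}$, so this limit is zero.
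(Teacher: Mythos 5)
There is a genuine gap, and it sits exactly at the point the paper flags as the heart of the problem. You apply the variational principle to the billiard map $T$ itself and then argue $h_{\mu}(T)=0$ for every $T$-invariant ergodic $\mu$. But the corollary is a statement about the complexity $p(n)$ of the symbolic system, i.e.\ about the subshift $(\Sigma,S)$ where $\Sigma=\overline{\phi(E_0)}$, and the paper explicitly warns that the variational principle cannot be applied to $T$ (it is not continuous on a compact metric space) and that ``the knowledge of $h_\mu(T)$ does not allow to compute $h_{top}(T)$.'' This is why the paper \emph{defines} $h_{top}(T)$ as $\lim\log p(n)/n$, identifies it with $h_{top}(S|\Sigma)$ via the topological generator $\xi$ (Lemma \ref{entrodef}), and then runs the variational principle for the \emph{shift}. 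The crucial consequence you miss: $\Sigma$ contains limit words in $\Sigma\setminus\phi(E_0)$ that are not the coding of any billiard orbit, and an ergodic $S$-invariant measure carried by these words need not come from any $T$-invariant measure at all. So even a complete proof that all Lyapunov exponents vanish for $T$-invariant measures (your shear-map/Margulis--Ruelle argument, which in any case needs the Katok--Strelcyn integrability hypotheses you concede may fail) leaves the entropy of such boundary measures of the shift entirely uncontrolled. Your closing sentence --- that the coding is ``injective off a set carrying no entropy,'' so the subshift entropy equals $h_{top}(T)$ --- is unjustified and in fact false as stated: $\phi$ is not injective (Lemma \ref{dim} and the remark after it), and establishing that the boundary words carry no entropy is precisely the content of the paper's Sections 3--4, not a one-line consequence.

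Your proposed repair for the boundary case --- that such measures degenerate onto a two-dimensional section on which the dynamics is a polygonal billiard --- is not what happens and is not the paper's route. For $v\in\Sigma\setminus\phi(E_0)$ the set $\sigma_v^-$ is a strip or point over an edge (Lemma \ref{entro3gkt}), but the induced symbolic dynamics is not that of a planar polygonal billiard, and no reduction mechanism is offered; you yourself locate ``the real difficulty'' there. The paper instead analyzes the discontinuities met by singular orbits through explicit line-geometry: equations $m=F(\theta)$ for lines through pairs of edges (Lemmas \ref{entro3eq}, \ref{entrogeom}), the surfaces $S(A_0,A_1,A_2)$ (Lemma \ref{eqdim3}), a countability argument showing all but countably many boundary words lie in the tame set $X$ (Proposition \ref{entroscruc}), and finally a counting argument showing the future determines only $O(n^N)$ pasts, hence zero entropy (Lemma \ref{3entrosur}). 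None of this machinery, nor any substitute for it, appears in your proposal, so the boundary case --- the entire novelty of the paper --- remains open in your argument, and with it the corollary.
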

 
 For the standard definitions and properties of entropy we refer to Katok and Hasselblatt \cite{Ha.Ka}.

\subsection{Overview of the proof}
We consider the shift map associated to the billiard map, see Section 2, and compute the metric entropy for each ergodic measure of this shift. We must treat several cases depending on the support of the measure. If the ergodic measure has its support included in the definition set, then the method of Katok 
can be used with minor changes, see Section 3. The other case can not appear in dimension two and represent the main problem in dimension three. We treat this case by looking at the billiard orbits which pass through singularities. By a geometric argument we prove in Section 4 that the support of a such measure is the union of two sets: a countable set and a set of words whose complexity can be bounded, see Proposition \ref{entroscruc} and Lemma \ref{3entrosur}.  

If we want to generalize this result to any dimension some problems appear.
Im dimension three, we treat two cases by different methods depending on the dimension of the cells.
In dimension $d$ there would be at least $d-1$ different cases and actually we have no method for these cases.
Moreover we must generalize Lemma  \ref{eqdim3} and the followings . Unfortunately this is much harder and cannot be made with computations.

\section{Background and notations}
\subsection{Definitions}
We consider the billiard map inside a convex polyhedron $P$. This map is
defined on the set $E\subset\partial{P}\times\mathbb{PR}^3$, by
the following method:

First we define the set $E'\subset\partial{P}\times\mathbb{PR}^3$.
A point $(m,\theta)$ belongs to $E'$ if and only if one of the two following points is true: \\
$\bullet$ The line $m+\mathbb{R}^*[\theta]$ intersects an edge of
$P$, where $[\theta]$ is a vector of $\mathbb{R}^3$ which
represents $\theta$.\\

 $\bullet$ The line $m+\mathbb{R}^*[\theta]$ is included inside
 the face of $P$ which contains $m$.

 Then we define $E$ as the set
 $$E=(\partial{P}\times
 \mathbb{PR}^3)\setminus E'.$$

 Now we define the map $T$:
Consider $(m,\theta)\in E$, then we have
$T(m,\theta)=(m',\theta')$ if and only if $mm'$ is colinear to
$[\theta]$, and $[\theta']=s[\theta]$, where $s$ is the linear
reflection over the face which contains $m'$.
$$T:E\rightarrow \partial{P}\times\mathbb{PR}^3$$
$$T:(m,\theta)\mapsto (m',\theta')$$
\begin{remark}
In the following we identify $\mathbb{PR}^3$ with the unit vectors
of $\mathbb{R}^3$ ({\it i.e} we identify $\theta$ and
$[\theta]$).
\end{remark}

\begin{definition}
The set $E$ is called the phase space.
\end{definition}
\begin{figure}[h]
\includegraphics[width= 5cm]{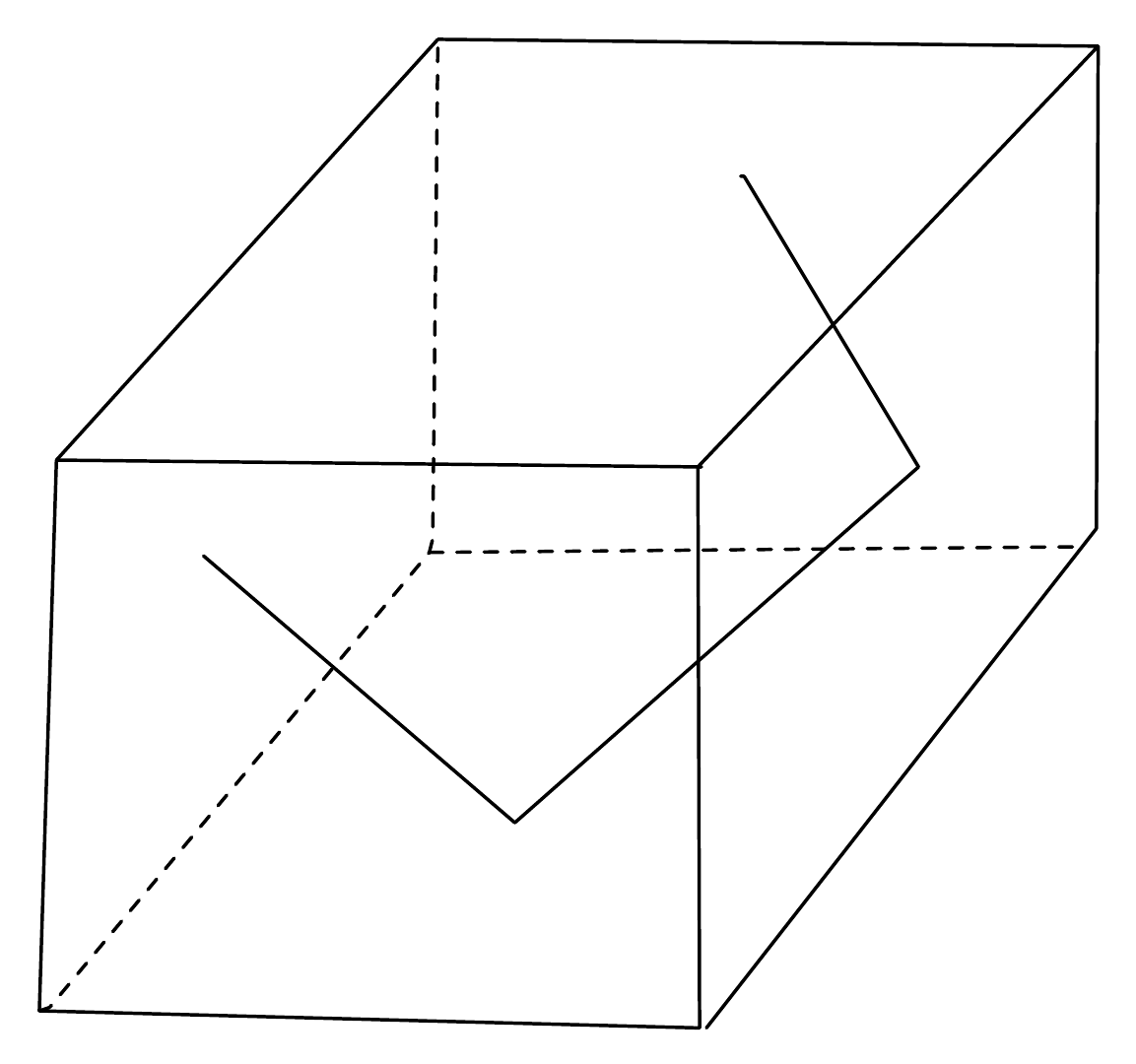}
\caption{Billiard map inside the cube}

\end{figure}

\subsection{Combinatorics}
\begin{definition}
Let $\mathcal{A}$ be a finite set called the alphabet. By a
language $L$ over $\mathcal{A}$ we mean always a factorial
extendable language: a language is a collection of sets
$(L_n)_{n\geq 0}$ where the only element of $L_0$ is the empty
word, and each $L_n$ consists of words of the form $a_1a_2\dots
a_n$ where $a_i\in\mathcal{A}$ and such that for each $v\in L_n$
there exist $a,b\in\mathcal{A}$ with $av,vb\in L_{n+1}$, and for
all $v\in L_{n+1}$ if $v=au=u'b$ with $a,b\in\mathcal{A}$ then
$u,u'\in L_n$.\\
The complexity function of the language $L$, $p:\mathbb{N}\rightarrow\mathbb{N}$ is
defined by $p(n)=card(L_n)$.
\end{definition}

\subsection{Coding}
We label each face of the polyhedron with a letter from the
alphabet $\{1\dots N\}$. Let $E$ be the phase space of the
billiard map and $d=\{d_1\dots d_N\}$ the cover of $E$ related to
the coding. The phase space is of dimension four : two coordinates
for the point on the boundary of $P$ and two coordinates for the
direction.

Let $E_0$ be the points of $E$ such that $T^n$ is defined,
continuous in a neighborhood for all $n\in\mathbb{Z}$. Denote by
$\phi$ the coding map, it means the map

$$\phi : E_0\rightarrow\{1,\dots, N\}^{\mathbb{Z}},$$
$$\phi(p)=(v_n)_\mathbb{Z},$$
where $v_n$ is  defined by $T^n(p)\in d_{v_n}$. Let $S$ denote the
shift map on $\{1\dots N\}^{\mathbb{Z}}$. We have the diagram,
\begin{equation*}
\begin{CD}
E_0     @>T>>  E_0\\
@V{\phi}VV        @VV{\phi}V\\
\phi(E_0) @>>S> \phi(E_0)
\end{CD}
\end{equation*}
with the equation $\phi\circ T=S\circ\phi.$

We want to compute the topological entropy of the billiard map.
We define the topological entropy of the billiard map as
the topological entropy of the subshift, see Definition
\ref{htop}. 

We remark that the proof of Theorem \ref{entro} given
in \cite{Ga.Kr.Tr} as a corollary of their result is not complete: They do not consider the case,
where the ergodic measure is supported on the boundary of
$\phi(E_0)$.

\subsection{Notations}
Let $\Sigma$ be the closure of $\phi(E_0)$, and consider the cover
$$d\vee T^{-1}d\vee \dots\vee T^{-n+1}d.$$
The cover $d$, when restricted to $E_0$, is a partition.
The sets of this cover are called $n$-cells.
If $v\in\Sigma$ we denote
$$\sigma_v=\bigcap_{n\in\mathbb{Z}}\overline{T^{-n}(d_{v_n}\cap E_0)}=
\displaystyle\bigcap_{n\in\mathbb{Z}}T^{-n}d_{v_n}.$$
It is the closure of the set of points of $E_0$ such that the orbit is coded by $v$. If $v\in\phi(E_0)$ then $\sigma_v$  is equal to $\phi^{-1}(v)$.
We denote $d^{-}=\displaystyle \bigvee^{\infty}_{n=0}T^{-n}d$ and
$$\sigma_v^{-}=\displaystyle\bigcap_{n\geq 0}\overline{T^{-n}(d_{v_n}\cap\phi(E_0))}
=\displaystyle\bigcap_{n\geq 0}T^{-n}d_{v_n}.$$

\begin{definition}\label{xi}
Let $\xi=\{c_1,\dots,c_k\} $ be the partition of $\Sigma$ given by
$$c_k=\overline{\phi(d_k\cap E_0)}.$$
\end{definition}
Finally we can define the topological entropy
\begin{definition}\label{htop}
Consider a polyhedron of $\mathbb{R}^3$, and $T$ the billiard map, then we define
$$h_{top}(T)=\lim_{n\rightarrow +\infty}\frac{\log p(n)}{n},$$
where $p(n)$ is the number of $n$-cells.
\end{definition}

This definition is made with the help of the following lemma which links it
to the topological entropy of the shift.
\begin{lemma}\label{entrodef}
With the same notation
$$\lim_{n\rightarrow +\infty}\frac{\log{p(n)}}{n}=h_{top}(S|\Sigma).$$
\end{lemma}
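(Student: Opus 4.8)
The plan is to match the number of $n$-cells $p(n)$ with the word-complexity of the subshift $(\Sigma,S)$ and then to read off the entropy from the standard cover description. Recall that for a subshift the topological entropy is carried by the clopen partition into time-$0$ cylinders, which here is exactly the partition $\xi=\{c_1,\dots,c_k\}$ of Definition \ref{xi}: since $\Sigma$ is a compact zero-dimensional space on which $\xi$ is a clopen generating partition, $h_{top}(S|\Sigma)=h(S,\xi)=\lim_n \frac1n\log N\!\left(\bigvee_{i=0}^{n-1}S^{-i}\xi\right)$, where $N(\cdot)$ is the number of nonempty atoms. Writing $p_\Sigma(n)=\mathrm{card}\,L_n(\Sigma)$ for the number of length-$n$ words occurring in points of $\Sigma$, each such word indexes exactly one nonempty atom of $\bigvee_{i=0}^{n-1}S^{-i}\xi$, so $N(\bigvee_{i=0}^{n-1}S^{-i}\xi)=p_\Sigma(n)$ and $h_{top}(S|\Sigma)=\lim_n\frac1n\log p_\Sigma(n)$, the limit existing by Fekete's lemma because $p_\Sigma(m+n)\le p_\Sigma(m)p_\Sigma(n)$. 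It therefore suffices to prove $p(n)=p_\Sigma(n)$.

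For this I would first set up the bijection between nonempty $n$-cells and length-$n$ words of the coding. An $n$-cell is a nonempty set $\bigcap_{i=0}^{n-1}T^{-i}d_{a_i}$ intersected with $E_0$; as $d$ restricts to a partition of $E_0$, such a cell is nonempty precisely when there is $x\in E_0$ with $T^i(x)\in d_{a_i}$ for $0\le i\le n-1$, that is, when $a_0a_1\cdots a_{n-1}$ is the length-$n$ prefix of $\phi(x)$. Distinct words label disjoint cells, so $p(n)$ counts the words occurring at position $0$ in some point of $\phi(E_0)$.

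It remains to remove the reference point and pass to the closure. Because every point of $E_0$ has its whole orbit defined and $\phi\circ T=S\circ\phi$, the set $\phi(E_0)$ is $S$-invariant; hence a word occurs at position $0$ in $\phi(E_0)$ iff it occurs at some position, and the previous count is exactly $\mathrm{card}\,L_n(\phi(E_0))$. Finally the cylinders of $\{1,\dots,N\}^{\mathbb{Z}}$ are clopen, so a finite word appears in a point of $\Sigma=\overline{\phi(E_0)}$ if and only if it already appears in a point of $\phi(E_0)$; thus $L_n(\Sigma)=L_n(\phi(E_0))$ and $p_\Sigma(n)=\mathrm{card}\,L_n(\phi(E_0))=p(n)$. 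Combined with the entropy formula above this gives $\lim_n\frac1n\log p(n)=h_{top}(S|\Sigma)$.

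I expect the only delicate point to be the bookkeeping around the closure and the two competing ambient spaces: one must check that passing from $\phi(E_0)$ to its closure $\Sigma$ creates no new finite words (handled by clopenness of cylinders), and that the finite $n$-cells, defined on $E_0$ rather than on the possibly larger $\Sigma$, are counted consistently with the cylinders underlying $h_{top}(S|\Sigma)$. Everything else is the routine translation between the geometric and symbolic pictures furnished by the commuting diagram $\phi\circ T=S\circ\phi$.
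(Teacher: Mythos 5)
Your proof is correct and follows essentially the same route as the paper: both rest on the observation that $\xi$ of Definition \ref{xi} is the (clopen) time-zero cylinder partition, hence a topological generator of $(S|\Sigma)$, so that $h_{top}(S|\Sigma)$ equals the exponential growth rate of the number of nonempty atoms of $\bigvee_{i=0}^{n-1}S^{-i}\xi$, which is then identified with the number $p(n)$ of $n$-cells. The paper asserts the identification $\mathrm{card}(\xi_n)=p(n)$ and the generator property without proof, while you supply the missing bookkeeping ($S$-invariance of $\phi(E_0)$, clopenness of cylinders showing $L_n(\Sigma)=L_n(\phi(E_0))$, and Fekete's lemma for existence of the limit), so yours is a more detailed version of the same argument rather than a different one.
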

\begin{proof}
The partition $\xi$, see Definition \ref{xi}, is a topological generator of
$(S|\Sigma)$ (see \cite{Pet} for a definition), thus
$$h(S|\Sigma)=\lim_{n\rightarrow +\infty}\frac{\log{card \xi_n}}{n},$$
and we have ${\rm card}(\xi_n)=p(n).$
\end{proof}
\begin{remark}
The number of cells, $p(n)$, is equal to the complexity of the
language $\Sigma$.\\
There are several other possible definitions (Bowen definition $\dots$) but we use this one since we are interested in the complexity function of the billiard map. 
\end{remark}

\subsection{Billiard}
\subsubsection{Cell}

We denote by $\pi$ the following map:

$$\pi: \partial{P}\times\mathbb{PR}^3\mapsto \mathbb{PR}^3$$
$$\pi:(m,\theta)\rightarrow \theta.$$
Consider an infinite word $v\in\phi(E_0)$. 
\begin{definition}
We consider the elements $(m,\theta)$ of $\partial{P}\times\mathbb{PR}^3$ as 
vectors $\theta$ with base point $m$. \\
We say that $X\subset \partial{P}\times\mathbb{PR}^3$ is a strip if 
all $x\in X$ are parallel vectors whose base points form an interval.\\
We  say that $X\subset \partial{P}\times\mathbb{PR}^3$ is a tube if 
all $x\in X$ are parallel vectors whose base points form an open polygon or an open ellipse.\\
\end{definition}
Now we recall the theorem of Galperin, Kruger  and Troubetzkoy \cite{Ga.Kr.Tr},  which describe the shape of $\sigma_v^-$:
\begin{lemma}\label{dim}
Let $v\in\phi(E_0)$ be an infinite word, then there are three cases:\\
The set $\sigma_v^-$ consists of only one point.\\
The set $\sigma_v^-$ is a strip.\\
The set $\sigma_v^-$ is a tube.\\
Moreover if $\sigma_v^-$ is a tube then $v$
is a periodic word.
\end{lemma}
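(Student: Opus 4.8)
The plan is to straighten the billiard orbits by unfolding and then reduce the statement to a purely convex-geometric question about straight lines meeting an infinite nested family of polygons. Fix the infinite word $v=v_0v_1v_2\dots$ and let $s_i$ be the reflection in the plane carrying the face labelled $i$. Define the unfolding isometries $U_0=\mathrm{id}$ and $U_k=U_{k-1}\circ s_{v_k}$, set $P_k=U_k(P)$, and let $F_k=U_k(\text{face }v_k)$ be the unfolded faces. Under this unfolding a forward orbit coded by $v$ becomes a single directed line $\ell\subset\mathbb{R}^3$ that meets every $F_k$ in the prescribed order, and conversely each such line is the straightening of a point of $\sigma_v^-$. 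The direction of $(m,\theta)\in\sigma_v^-$ equals the direction of $\ell$, and two points of $\sigma_v^-$ are parallel if and only if their unfolded lines are. So I would reduce Lemma~\ref{dim} to the following: the set $\mathcal{L}_v$ of directed lines meeting all the $F_k$ consists of mutually parallel lines whose feet form a single point, a segment, or a planar convex region.

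\emph{Fixed-direction cross section.} For a fixed admissible direction $\theta$, let $q_\theta$ be the orthogonal projection onto $\Pi_\theta=\theta^{\perp}$: a line of direction $\theta$ meets $F_k$ exactly when its foot lies in the convex polygon $q_\theta(F_k)$, so the admissible feet form $C_\theta=\bigcap_{k\ge 0}q_\theta(F_k)$. Being a decreasing intersection of compact convex polygons, $C_\theta$ is compact and convex, hence empty, a point, a segment, or a two-dimensional convex set. Once the direction is known to be unique, this yields exactly the trichotomy point / strip / tube.

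\emph{Uniqueness of the direction (the crux).} The heart of the argument is that $\pi(\sigma_v^-)$ reduces to a single direction. I would argue by escape to infinity: a directed line meeting both the bounded set $F_0$ and a far-away bounded set $F_k$ has its direction confined to a cone about the direction from $F_0$ to $F_k$ of angular radius $O\!\left(\mathrm{diam}(P)/\mathrm{dist}(F_0,F_k)\right)$; if $\mathrm{dist}(F_0,F_k)\to\infty$, intersecting these cones over $k$ pins the direction to one value, so all admissible lines are parallel and the previous step applies. The main obstacle is precisely the dichotomy underlying this step: either the unfolded faces escape to infinity, or the segment lengths are summable and the crossing points accumulate at a point of $\partial P$. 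The accumulating case is the genuinely three-dimensional difficulty — it cannot occur for planar billiards — and I would handle it by a separate, finer analysis showing that such an itinerary leaves no transverse freedom, so that $\sigma_v^-$ collapses to the single-point case.

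\emph{Tubes are periodic.} Finally suppose $C_\theta$ is two-dimensional. Then $\sigma_v^-$ carries an honest two-parameter family of parallel orbits sharing the code $v$, and since $P$ has only finitely many faces, the symbolic data attached to the unfolding must recur: a pigeonhole argument on the pair (face label, unfolding isometry modulo the symmetries of $P$) yields indices $k<k'$ at which the whole rigid tube returns to itself, forcing $v$ to be periodic, say of period $p$. The period-$p$ return is then a screw isometry whose linear part is a rotation $\rho$ preserving $C_\theta$; when $\rho$ has finite order the active constraints are the linear ones coming from edges and $C_\theta$ is a polygon, while when $\rho$ is an irrational rotation the closure of its orbits forces the elliptic boundary, which accounts for the ellipse in the statement. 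Collecting the three regimes — a unique direction with a $0$-, $1$-, or $2$-dimensional convex cross section, the last forcing $v$ periodic — completes the proof.
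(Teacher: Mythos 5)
First, note that the paper itself offers no proof of this lemma: it is recalled verbatim from Galperin--Kruger--Troubetzkoy \cite{Ga.Kr.Tr}, so your proposal must be measured against their argument. Your overall strategy (unfold, fix a direction, observe that the admissible feet form a decreasing intersection of compact convex sets, and isolate direction-uniqueness as the crux) is indeed the right skeleton and matches the known proof in spirit. But the proposal has a genuine gap exactly at the step you yourself flag as the crux. Your escape-to-infinity argument for uniqueness of the direction only works on one horn of the dichotomy $\mathrm{dist}(F_0,F_k)\to\infty$; on the other horn (summable segment lengths, reflection points accumulating at a point of $\partial P$) you write that you ``would handle it by a separate, finer analysis showing that such an itinerary leaves no transverse freedom'' --- this is a promissory note, not an argument. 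What actually closes this case is a nontrivial theorem: a billiard trajectory in a polyhedron cannot have infinitely many reflections accumulating in bounded path length (reflections near an accumulation point reduce to the billiard in a polyhedral cone, where the number of reflections is finite --- a result of Sinai and Galperin). With that theorem the accumulating horn is vacuous for $v\in\phi(E_0)$ and the cone-shrinking argument pins the direction; without it, nothing in your text substantiates the collapse of $\sigma_v^-$ to a point.

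The tube-implies-periodic step also fails as written. Your pigeonhole on the pair (face label, unfolding isometry modulo the symmetries of $P$) cannot terminate: the group $G(P)$ generated by the face reflections is generically infinite (the paper itself notes that rational polyhedra are not even dense in dimension three), so this symbolic data takes infinitely many values and no recurrence follows. The correct mechanism is measure-theoretic, not combinatorial: a tube carries a beam of parallel orbits with cross-section of positive area; this area is preserved along the orbit, while the phase space carries a finite invariant measure, so the iterated beams must overlap; an overlap of two parallel beams whose codes are shifts of one another forces $v$ to be periodic. Two smaller inaccuracies in the same paragraph: a decreasing intersection of convex polygons is an arbitrary compact convex set, so the specific ``polygon or ellipse'' shape genuinely requires the periodic return map (you gesture at this correctly), but a convex set invariant under an infinite-order rotation is a disc, not a general ellipse --- the ellipse of the statement arises only when that disc, orthogonal to $\theta$, is traced on a face of $\partial P$.
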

\begin{remark}
The preceding lemma shows that $\phi$ is not bijective on $E_0$.
\end{remark}
By the preceding lemma for each infinite word $v$ the set $\pi(\sigma_v^-)$ is unique. If the base points form an interval we say that $\sigma_v^-$ is of dimension one, and of dimension two if the base points form a polygon or an ellipse.

\begin{definition}\label{def}
As in the preceding lemma, if $v$ is an infinite word we say that
$\pi(\sigma_v^-)$ is the direction of the word.\\
Moreover if $v$ is an infinite word, we identify $\sigma_v^-$ with
the set of base points  $a$ which fulfills $\sigma_v^-=a\times \pi(\sigma_v^-)$.
\end{definition}

\subsubsection{Geometry}
First we define the rational polyhedron. Let $P$ be a
polyhedron of $\mathbb{R}^3$, consider the linear reflections
$s_i$ over the faces of $P$.
\begin{definition}
We denote by $G(P)$ the group generated by the $s_i$, and we say that $P$ is
rational if $G(P)$ is finite.
\end{definition}
In $\mathbb{R}^2$ a polygon is rational if and only if all the
angles are rational multiples of $\pi$. Thus the rational polygons
with $k$ edges are dense in the set of polygons with $k$ edges. In
higher dimension, there is no simple characterization of rational
polyhedrons, moreover their set is not dense in the set of
polyhedrons with fixed combinatorial type (number of edges,
vertices, faces).

An useful tool in the billiard study is the unfolding. When a trajectory passes
through a face, there is reflection of the line. The unfolding consists in
following the same line and in reflecting the polyhedron over the
face. For example for the billiard in the square/cube, we obtain the usual
square/cube tiling.
In the following we will use this tool, and an edge means an edge
of an unfolded polyhedron.

\subsection{Related results}
If $P$ is a rational polyhedron, then we can define the first
return map of the directional flow in a fixed direction $\omega$.
This map $T_{\omega}$ is a polygon exchange (generalization of
interval exchange). Gutkin and Haydn have shown :

\begin{theorem}\cite{Gu.Ha}
Let $P$ be a rational polyhedron and $w\in\mathbb{S}^2$ then
$$h_{top}(T_{\omega})=0.$$
Moreover if $\mu$ is any invariant measure then
$$h_{\mu}(T)=0.$$
\end{theorem}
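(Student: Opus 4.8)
The plan is to reduce the statement to a purely dynamical fact about piecewise isometries and then to attack the topological and the measure-theoretic halves by the two mechanisms that are genuinely available here: non-expansion for the metric entropy, and a complexity count for the topological entropy. First I would record that $T_{\omega}$ is the first return map of the straight line flow in direction $\omega$, and that rationality of $P$ (finiteness of $G(P)$) means that, after unfolding, this flow becomes a translation flow on the flat manifold obtained by gluing the finitely many copies $g(P)$, $g\in G(P)$. Consequently $T_{\omega}$ is a polygon exchange, i.e. a piecewise isometry of a bounded polygonal region whose branches are rigid motions (in fact translations). The whole statement then follows from the assertion that such a map has zero topological entropy and that each of its invariant measures has zero metric entropy.

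For the measure-theoretic ``moreover'' part I would exploit that on the interior of each piece $T_{\omega}$ is a rigid motion, so wherever its derivative exists it is orthogonal and every Lyapunov exponent of every invariant measure $\mu$ vanishes. The Margulis--Ruelle inequality, in the Katok--Strelcyn form valid for maps with singularities, then gives $h_{\mu}(T_{\omega})\le \sum_i \lambda_i^{+}=0$, hence $h_{\mu}(T_{\omega})=0$. The only point to verify is that the singularity set, here a finite union of segments, satisfies the mild regularity hypotheses of that theory (integrability of the logarithm of the distance to the cut locus), which holds because it is a finite union of smooth arcs. The same reasoning covers invariant measures of the directional billiard itself through the standard identification of the flow with its return map.

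For $h_{top}(T_{\omega})=0$ the natural route, consistent with Definition \ref{htop}, is to bound the number of $n$-cells $p(n)$ directly. The $n$-cells are the connected components of the complement of $\bigcup_{i=0}^{n-1}T_{\omega}^{-i}(\partial\mathcal{P})$, where $\partial\mathcal{P}$ denotes the union of edges of the exchange. Since $T_{\omega}$ is an isometry on each branch, each $T_{\omega}^{-i}(\partial\mathcal{P})$ has the same total length as $\partial\mathcal{P}$, so the cut locus after $n$ steps has total length at most $n$ times the length of $\partial\mathcal{P}$; moreover the finiteness of $G(P)$ forces all its segments to point in finitely many directions. Controlling the number of segments and of their pairwise intersections would then bound $p(n)$ by a polynomial in $n$, whence $\lim_n \frac{\log p(n)}{n}=0$.

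The step I expect to be the real obstacle is precisely this subexponential complexity bound. A priori, pulling the edge set back through the finitely many branches could multiply the number of segments by a constant factor at each iteration, which yields only an exponential bound and no information on the entropy. The content of the argument is to show that this proliferation does not occur: the isometric (indeed translation) structure together with the finiteness of $G(P)$ should force the newly created segments to accumulate in finitely many directions and with only linearly growing total length, so that the cells they cut out grow polynomially. This is where rationality of $P$ is genuinely used, and it is essentially the statement that piecewise isometries have zero topological entropy (Buzzi); once it is in hand, a variational principle, where available in this piecewise continuous setting, provides a cross-check of the ``moreover'' part.
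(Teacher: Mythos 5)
The paper does not actually prove this statement: it is quoted from Gutkin and Haydn \cite{Gu.Ha} as background, so there is no internal proof to compare against, and your proposal must stand on its own. As it stands it has a genuine gap in each half. For the topological half you explicitly postpone the decisive step, namely the subexponential (in fact polynomial) bound on the number of $n$-cells, and the observations you do make cannot deliver it: preimages of the cut locus under a $k$-branch exchange can multiply the segment count by a factor of $k$ at every step, and ``finitely many directions plus linearly growing total length'' is perfectly compatible with exponentially many short segments, hence with exponentially many cells, since the cell count of an arrangement is controlled by the \emph{number} of segments, not their length. You then propose to quote Buzzi \cite{Bu}; but Buzzi's theorem (zero topological entropy for piecewise isometries) is a later generalization of precisely the statement to be proved, so invoking it is circular as a proof of the Gutkin--Haydn theorem. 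The actual content of \cite{Gu.Ha} is exactly this combinatorial estimate, extracted from the translation-flow structure on the finite cover furnished by the finiteness of $G(P)$, and it is not replaceable by the length-and-direction count you give.

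The measure-theoretic half is where your argument would fail outright rather than merely stall. The Katok--Strelcyn form of the Margulis--Ruelle inequality does not apply to \emph{every} invariant measure: its hypotheses are joint conditions on the map and the measure (for instance $\log d(\cdot,\mathrm{Sing})\in L^{1}(\mu)$, or power-law decay of $\mu$ on $\epsilon$-neighborhoods of the singular set), and the geometric smoothness of the singular arcs alone does not grant them. An invariant measure concentrated on, or accumulating rapidly on, the singular segments --- exactly the kind of measure this paper exists to handle, namely measures supported in $\Sigma\setminus\phi(E_0)$, treated in Section 4 --- can violate the integrability condition, so the conclusion $h_{\mu}\le\sum_{i}\lambda_{i}^{+}=0$ is unjustified for arbitrary $\mu$; your claim that the hypothesis ``holds because it is a finite union of smooth arcs'' conflates a property of the singular set with a property of the measure. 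The standard way to get the ``moreover'' uniformly in $\mu$ is not Pesin theory but the same counting bound as in the topological half: for any invariant $\mu$ and the natural partition $\xi$ one has $H_{\mu}\bigl(\bigvee_{i=0}^{n-1}T_{\omega}^{-i}\xi\bigr)\le\log p(n)$, so a polynomial bound on $p(n)$ kills $h_{\mu}(T_{\omega},\xi)$ for all invariant measures simultaneously, leaving only the generating property of $\xi$ (or a zero-entropy fiber argument for non-separated parallel orbits) to check. In short, the one estimate you postponed is the whole theorem, for both halves, and your proposed variational-principle cross-check is unavailable here for the reason the paper itself notes: $T$ is not a continuous map on a compact space.
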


Buzzi \cite{Bu}, has generalized this result. He proves that each
piecewise isometrie of $\mathbb{R}^n$ have zero topological
entropy. Remark that a polygonal exchange is a piecewise
isometry.
\section{Variational principle}

We use the variational principle to compute the entropy
$$h_{top}(S|\Sigma)=\displaystyle\sup_{\substack{\mu\\ ergo}}
h_{\mu}(S|\Sigma).$$ Remark that we cannot apply it to the map $T$
since it is not continuous on a compact metric space. The
knowledge of $h_\mu(T)$ does not allow to compute $h_{top}(T)$. We
are not interested in the atomic measures because the associated
system is periodic, thus their entropy is equal to zero. We split
into two cases $supp(\mu)\subset\phi(E_0)$ or not. We begin by
treating the first case which is in the same spirit as the
argument in Katok \cite{Ka}.

\begin{figure}[t]
\begin{center}
%\psfrag*{A}{\footnotesize$A$}
%\psfrag*{L}{\footnotesize$L$}
%\psfrag*{B}{\footnotesize$B$}
%\psfrag*{t}{\footnotesize$\theta$}
%\psfrag*{r}{\footnotesize$L(\sigma_v)$}
%\psfrag*{q}{\footnotesize$L(\sigma_{Sv})$}
\includegraphics[width= 6cm]{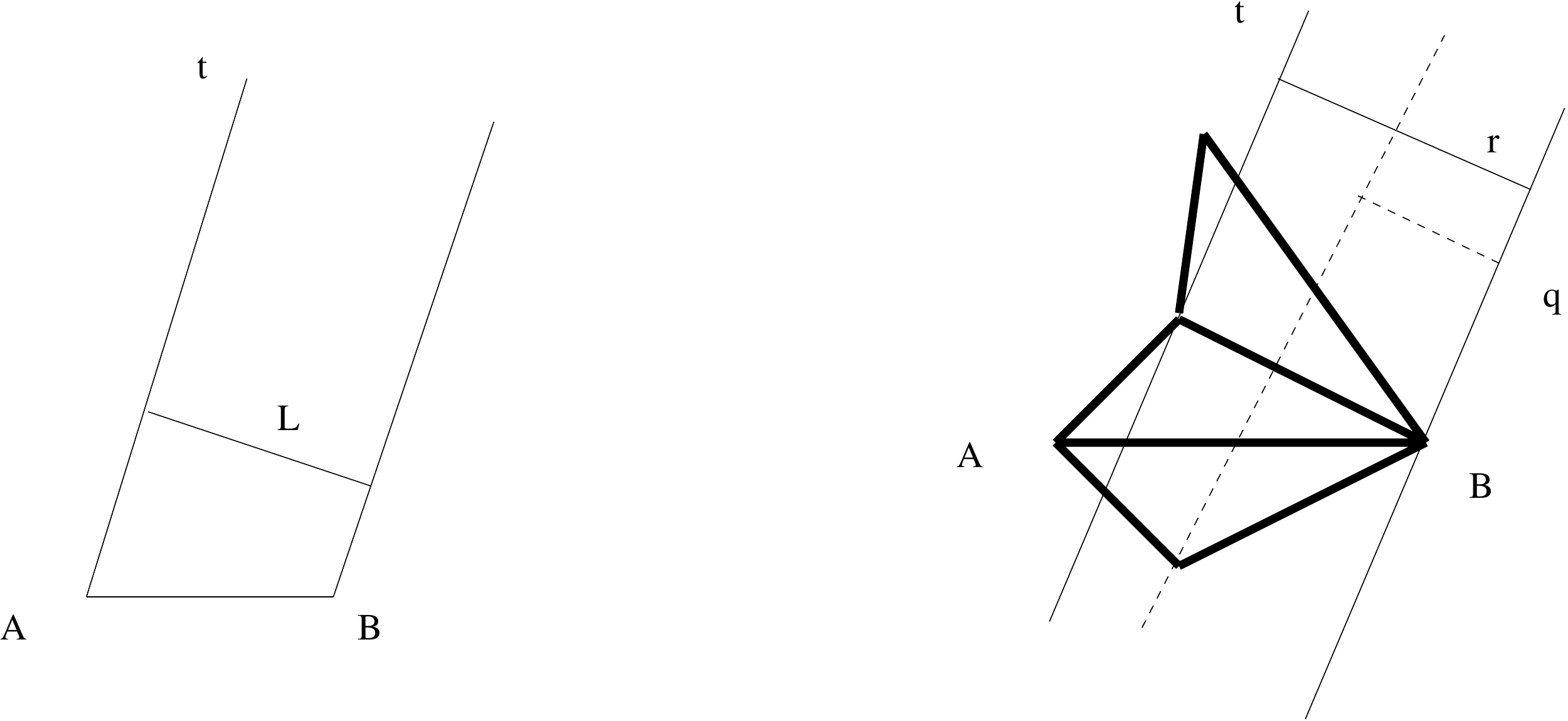}
\caption{Billiard invariant}
\label{e31}
\end{center}
\end{figure}

\begin{lemma}\label{lemka}
Let $\mu$ be an ergodic measure with support in $\phi(E_0)$. We
denote $\xi^{-}=\displaystyle\bigvee^{\infty}_{n=0}S^{-n}\xi$,
where $\xi$ is defined in Definition \ref{xi}. Up to a set of
$\mu$ measure zero we have
$$S\xi^{-}=\xi^{-}.$$
\end{lemma}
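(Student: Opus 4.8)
The plan is to read the claimed equality as a determinism statement for the symbolic past. First I would record the formal identity: since $\xi^-=\bigvee_{n\ge 0}S^{-n}\xi$ is the partition according to the coordinates $v_0,v_1,\dots$, its image $S\xi^-$ is the partition according to $v_{-1},v_0,v_1,\dots$. Hence $\xi^-\le S\xi^-$ holds unconditionally, and the content of the lemma is the reverse refinement $S\xi^-\le\xi^-$ up to a $\mu$-null set; equivalently, the letter $v_{-1}$ is $\mu$-almost everywhere a function of the forward coding $(v_0,v_1,\dots)$. Because $\mu$ is $S$-invariant this is the same as requiring that on each atom $\sigma_w^-$ of $\xi^-$ the preceding letter be $\mu$-almost surely constant.

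Next I would reduce the problem with the help of Lemma~\ref{dim}, which classifies the atoms $\sigma_w^-$. If $\sigma_w^-$ is a single point the preceding letter is trivially determined. If $\sigma_w^-$ is a tube then $w$ is periodic; a periodic symbolic orbit carries zero entropy, so we may assume that the push-forward of $\mu$ to the coding space is non-atomic, and then the countable set of periodic words---in particular the tube case---is $\mu$-null. The remaining and decisive case is that of a \emph{strip}: an interval $I$ of base points carrying a single parallel direction $\theta$. Pulling back by $T^{-1}$, the parallel rays issuing backward from the points of $I$ sweep a region of $\partial P$, and $v_{-1}$ is locally constant along $I$; it fails to be globally constant exactly when this backward sweep crosses an edge of $P$, splitting $I$ into finitely many subintervals that land on distinct faces.

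Thus everything comes down to showing that the set of \emph{straddling} strips, whose backward continuation meets an edge, is $\mu$-negligible. The route I would take is to observe that such a strip contains an interior base point whose backward trajectory grazes an edge, that is, a point of $E'$; accordingly the bad set of orbits is covered by a countable family indexed by the finitely many faces and the edges of the unfolded polyhedron, each member consisting of orbits whose past trajectory meets a fixed edge. Using that $\mu$ is carried by $\phi(E_0)$ and is preserved by the invertible billiard map, I would prove that each such member is $\mu$-null and conclude by countable subadditivity; the almost-sure constancy of $v_{-1}$ on $\sigma_w^-$ then gives $S\xi^-=\xi^-$ modulo $\mu$.

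The main obstacle is precisely this last null-set estimate, and it is where the phenomenon is genuinely three-dimensional. In contrast to the planar case, a beam of parallel rays in a polyhedron can split backward across an edge while retaining a common forward coding, so straddling strips cannot be excluded a priori; worse, for a fixed face the directions producing a backward straddle need not form a Lebesgue-null set, so the estimate cannot rest on the direction marginal alone. I expect the argument to exploit the isometric, hence non-expanding, nature of $T$ together with the $T$-invariance and ergodicity of $\mu$---rather than any smoothness of the direction distribution---to show that straddling is seen with probability zero; supplying this is exactly the ``minor change'' to Katok's scheme alluded to in the overview.
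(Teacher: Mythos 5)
Your framing and reductions coincide with the paper's: the unconditional refinement $\xi^-\le S\xi^-$, the restatement as ``$(v_0,v_1,\dots)$ determines $v_{-1}$ $\mu$-a.e.'', the trivial point case, and the dismissal of the tube case (your route via non-atomicity and countability of periodic words is a harmless variant of the paper's, which uses ergodicity to get that in the dimension-two case a.e.\ word is periodic, whence the past is determined anyway). The genuine gap is in the strip case, which you correctly identify as the crux and then do not prove: you reduce the lemma to showing that ``straddling'' strips, whose backward continuation splits across an edge, form a $\mu$-null set, propose to cover this set by countably many edge-hitting events, and then state explicitly that you cannot establish that each event is null --- indeed you observe yourself that the direction marginal gives no control, and you offer no substitute mechanism. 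That missing step is the entire content of the lemma, not a peripheral estimate, and your covering strategy is not obviously workable at all (the dividing orbits that graze an edge are not themselves in the bi-infinite coded set, and nothing forces any single edge-hitting event to be null).

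The paper closes exactly this gap with a one-line monotonicity device that your proposal never constructs. Define $L(\sigma_v^-)$ to be the transversal width of the strip $\sigma_v^-+\mathbb{R}\theta$. Unfolding is by reflections, so $T$ preserves width, $L(T\sigma_v^-)=L(\sigma_v^-)$, while $T\sigma_v^-\subset\sigma_{Sv}^-$ gives $L(\sigma_v^-)\le L(\sigma_{Sv}^-)$. Thus $L$ is a sub-invariant of $S$, hence constant $\mu$-a.e.\ by ergodicity; for a.e.\ $v$ the sets $T\sigma_v^-$ and $\sigma_{Sv}^-$ are then nested closed intervals of equal length, hence equal, so $\sigma_v^-=T^{-1}\sigma_{Sv}^-$ is determined by $v_1,v_2,\dots$, and in particular so is $v_0$. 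This rules out straddling globally and at once: a straddle on a positive-measure set would force $L\circ S>L$ there, contradicting a.e.\ constancy. Your intuition that the argument should ``exploit the isometric, non-expanding nature of $T$ together with invariance and ergodicity'' points at precisely this invariant, but without exhibiting $L$ and running the sub-invariance argument, the proof is incomplete where it matters most.
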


\begin{proof}
As $\mu(\phi(E_0))=1$, the cover $\xi$ can be thought as a partition of
$\phi(E_0)$. Let $v\in\phi(E_0)$, then the set
$\sigma_v^-$ can be thought as an element of $d^{-}$.
The set $\overline{\phi(\sigma_{v}^-\cap E_0)}$ coincides with the
set of $\xi^{-}$ which contains $v$.

By Lemma \ref{dim} the dimension of $\sigma_v^-$ can take three
values.

 We have $\sigma_{S^{-1}v}^-\subset T^{-1}\sigma_{v}^-$,
thus the set of $v$ such that  $\sigma_{v}^-$ is a point is
invariant by $S$. The ergodicity of $\mu$ implies that this set
either has zero measure or full measure.

Assume it is of full measure,
then $d^{-}$ is a partition of points, and same thing for
$\xi^{-}$. Then $\xi^{-}$ is a refinement of  $S\xi^{-}$ , this implies that those two sets are equal.

Assume it is of zero measure. Then by ergodicity there are two
cases : $\sigma_{v}^-$ is an interval or of dimension two for a
set of full measure.

$\bullet$ Assume $\sigma_{v}^-$ is an interval for a
full measure set of $v$.
% then the preceeding  inclusion shows that $\sigma_{S^{-1}v}^-$ is
% contained in the preimage of an interval, thus it is an interval.

If $\theta$ is the direction of $v$, then consider the strip
$\sigma_v^-+\mathbb{R}\theta$. Consider a line included in the plane of the strip and 
orthogonal to the axis $\mathbb{R}\theta$, and denote $L(\sigma_v^-)$ the length of
the set at the intersection of the line and the strip, see Figure
\ref{e31}.

Clearly we have $T(\sigma_{v}^-)\subset \sigma_{Sv}^-,$
thus we have $L(T\sigma_{v}^-)\leq L(\sigma_{Sv}^-).$
Since $L(T\sigma_v^-)=L(\sigma_v^-)$ we conclude that the function $L$
is a sub-invariant of $S$.

Since $\mu$ is ergodic the function $L$ is constant $\mu$ {\em a.e}.
Thus for $\mu$ {\em a.e} $v$ we obtain two intervals of same
length, one included in the other. They are equal. We deduce
$\sigma_{Sv}^-=T\sigma_{v}^-.$
This implies that $v_1,v_2,\dots$ determines $v_0$ almost surely.
It follows that
$$S\xi^-=\xi^- \mu {\it a.e}.$$

$\bullet$ If $\sigma_{v}^-$ is of dimension $2$ for a positive measure
set of $v$, by ergodicity it is of the same dimension for $\mu$ {\em a.e} $v$.
It implies that $v$ is a periodic word  $\mu$ {\em a.e}, thus $S\xi^{-}=\xi^{-} \mu$ {\em a.e}.
\end{proof}

Since $h(S,\xi)=H(S\xi^-|\xi^-)=0$ we have :

\begin{corollary}\label{supmes1}
If $supp(\mu)\subset\phi(E_0)$ then $h_{\mu}(S|\Sigma)=0$.
\end{corollary}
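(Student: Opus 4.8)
The plan is to obtain the Corollary as an immediate consequence of Lemma~\ref{lemka} together with the Kolmogorov--Sinai theorem. Since $\mathrm{supp}(\mu)\subset\phi(E_0)$ and the cover $\xi$ restricts to a genuine partition of $\phi(E_0)$ (the overlaps of the sets $c_k=\overline{\phi(d_k\cap E_0)}$ lie in the boundary $\Sigma\setminus\phi(E_0)$, which is $\mu$-null), we may treat $\xi$ as a partition modulo $\mu$. By Lemma~\ref{entrodef} the partition $\xi$ is a topological generator of $(S|\Sigma)$, so its iterates generate the Borel $\sigma$-algebra of $\Sigma$; hence $\xi$ is a measurable generating partition and the Kolmogorov--Sinai theorem gives $h_\mu(S|\Sigma)=h_\mu(S,\xi)$. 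It therefore suffices to prove $h_\mu(S,\xi)=0$.

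First I would rewrite the entropy of $\xi$ as a conditional entropy of the present given the future. With $\xi^-=\bigvee_{n\ge 0}S^{-n}\xi$ the Kolmogorov--Sinai formula reads $h_\mu(S,\xi)=H(\xi\mid\bigvee_{n\ge 1}S^{-n}\xi)$. Because $\mu$ is $S$-invariant, conditional entropy is unchanged under applying $S^{-1}$ to both partitions, and since $S\xi^-=S\xi\vee\xi^-$ the information carried by $S\xi^-$ beyond $\xi^-$ is exactly that of $\xi$ conditioned on the future. This yields the identity $h_\mu(S,\xi)=H(S\xi^-\mid\xi^-)$ recorded just above the statement.

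The decisive input is then Lemma~\ref{lemka}, which asserts $S\xi^-=\xi^-$ up to a set of $\mu$-measure zero. Conditioning a partition on an a.e.\ equal partition produces zero entropy, so $H(S\xi^-\mid\xi^-)=0$, whence $h_\mu(S,\xi)=0$ and finally $h_\mu(S|\Sigma)=0$. The computations here are entirely routine; the only point requiring care---and the mild obstacle---is the verification that $\xi$ is a bona fide generating partition modulo $\mu$, so that Kolmogorov--Sinai applies and the entropy over $\mathrm{supp}(\mu)$ is captured by the single partition $\xi$. Given that $\mu$ charges only $\phi(E_0)$, where the coding map $\phi$ is well defined, this reduces to the generator property already established in Lemma~\ref{entrodef}.
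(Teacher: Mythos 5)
Your proof is correct and follows exactly the route the paper takes: the paper's entire argument is the remark ``Since $h(S,\xi)=H(S\xi^-\mid\xi^-)=0$'' preceding the corollary, i.e.\ the generator property of $\xi$ from Lemma \ref{entrodef}, the identity $h_\mu(S,\xi)=H(S\xi^-\mid\xi^-)$, and the vanishing of this conditional entropy via Lemma \ref{lemka}. You have merely spelled out the details the paper leaves implicit (Kolmogorov--Sinai, and that $\xi$ is a partition modulo $\mu$ since the overlaps of the closures $c_k$ lie in the $\mu$-null set $\Sigma\setminus\phi(E_0)$), which is exactly the intended argument.
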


\section{Measures on the boundary}
We will treat the cases of ergodic measures, satisfying
$$X=supp(\mu)\subset\Sigma\setminus\phi(E_0).$$
First we generalize Lemma \ref{dim}:

\begin{lemma}\label{entro3gkt}
For a convex polyhedron, for any word
$v\in\Sigma\setminus\phi(E_0)$ the set $\sigma_v^-$ is connected
and is a strip.
\end{lemma}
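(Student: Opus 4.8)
The plan is to reduce the analysis of $\sigma_v^-$ to a single direction and then to a planar convexity argument, exactly as in the proof of Lemma \ref{dim}, the only new input being the control of the limiting behaviour for words on the boundary of $\phi(E_0)$.

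First I would fix $v\in\Sigma\setminus\phi(E_0)$ and unfold: a point $(m,\theta)\in\sigma_v^-$ is an oriented line that crosses, in order, the infinite sequence of unfolded faces $F_0,F_1,F_2,\dots$ coded by $v$. The first step is to show that all elements of $\sigma_v^-$ are parallel, i.e.\ that $\sigma_v^-$ has a single direction $\theta$. If the sequence $(F_i)$ escapes to infinity in the unfolding, two lines passing through every $F_i$ must have the same direction, since the angular spread of the directions joining the bounded set $F_0$ to a far away $F_i$ tends to $0$. The only alternative is that $(F_i)$ stays in a bounded region, which forces $v$ to be periodic; this case I would treat by the same argument that makes tubes parallel in Lemma \ref{dim}.

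Once the direction $\theta$ is fixed, I would project along $\theta$. Writing $\pi_\theta$ for this orthogonal projection onto $\theta^\perp$, the base points of $\sigma_v^-$ are exactly $\{m\in F_0:\pi_\theta(m)\in\pi_\theta(F_i)\text{ for all }i\ge 0\}$. Each shadow $\pi_\theta(F_i)$ is a convex polygon, so this set is an intersection of convex sets and is therefore convex; in particular $\sigma_v^-$ is connected. It remains to identify its dimension. It cannot be two-dimensional: a two-dimensional family would be an open beam of parallel orbits whose central orbit never meets an edge and stays continuous under iteration, hence would be a genuine element of $E_0$ coding to $v$, contradicting $v\notin\phi(E_0)$. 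So $\sigma_v^-$ is at most one-dimensional.

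The heart of the matter, and the step I expect to be the main obstacle, is to show that $\sigma_v^-$ is not a single point, so that it is genuinely a strip. Here I would use that $v$ lies in the closure of $\phi(E_0)$ but not in it: writing $v=\lim_k v^{(k)}$ with $v^{(k)}\in\phi(E_0)$, the sets $\sigma_{v^{(k)}}^-$ are points, strips or tubes by Lemma \ref{dim}, and the failure of $v$ to be a good word is caused by the limiting orbit passing through an edge of an unfolded polyhedron. For the fixed direction $\theta$, the condition of meeting a given one-dimensional edge cuts the two-dimensional set of base positions down to a one-dimensional segment, and the approximating orbits $\sigma_{v^{(k)}}^-$ accumulate onto this whole segment rather than onto a single point. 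Making this accumulation argument precise, in particular ruling out the degenerate case where the limiting orbit is pinned by two independent edge conditions and carefully controlling the closures appearing in the definition of $\sigma_v^-$, is the delicate point; once it is done, $\sigma_v^-$ is a connected one-dimensional set of parallel vectors, that is, a strip.
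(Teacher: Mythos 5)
There is a genuine gap, and it is exactly at the step you yourself flag as ``the heart of the matter'': you try to prove that $\sigma_v^-$ is never a single point, and that is both unnecessary and false. The paper's lemma is to be read with a degenerate strip allowed: its proof concludes only that $\sigma_v^-$ ``is an interval or a point,'' and the text immediately following the lemma says explicitly that $\dim\sigma_v^-$ can be $0$ or $1$; the later definitions even treat the point case separately ($Disc(v,int)$ when $\sigma_v^-$ is a point). The ``degenerate case where the limiting orbit is pinned by two independent edge conditions'' that you hope to rule out genuinely occurs and cannot be excluded: a singular orbit can pass through two pairwise non-coplanar edges (or a vertex), and then by Lemma \ref{entro3eq} one gets two independent equations $m=F(\theta)=G(\theta)$ pinning $(m,\theta)$ down to a point. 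Indeed the whole of Proposition \ref{entroscruc} is devoted to exactly these words: those pinned by several independent conditions form the countable set $V\setminus X$, disposed of via Lemma \ref{3Ka}, while the rest are handled by the complexity bound of Lemma \ref{3entrosur}. By aiming at the stronger, false statement you have made the lemma unprovable; the correct target is only: unique direction, convex (hence connected), and dimension at most one.

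The rest of your argument is sound and close to the paper's, though the paper is more economical. For the unique direction, the paper simply writes $v=\lim v^n$ with $v^n\in\phi(E_0)$, notes each $v^n$ has a unique direction $\theta_n$ by Lemma \ref{dim}, and passes to the limit; your escaping-unfolding argument reaches the same conclusion but your claim that a bounded unfolding forces $v$ periodic is not justified (unfolded copies may overlap and accumulate in a bounded region for an irrational polyhedron), so you should replace it by the limit argument. Your convexity-of-shadows step is the paper's ``convex as intersection of convex sets'' (this is where convexity of $P$ enters). For the dimension bound, the paper argues directly that since $v\notin\phi(E_0)$ the base points of $\sigma_v^-$ project into an edge, hence the set is at most one-dimensional; your beam argument (a two-dimensional $\sigma_v^-$ would contain a nonsingular interior orbit coded by $v$, forcing $v\in\phi(E_0)$) is an acceptable variant. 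With the last step deleted and the statement read as ``point or interval with a single direction,'' your proof becomes essentially the paper's.
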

We remark that Lemma \ref{entro3gkt} is the only place where we
use the convexity of $P$.

\begin{proof}
First the word $v$ is a limit of words $v^n$ in $\phi(E_0)$. Each
of these words $v^n$ have a unique direction $\theta_n$ by Lemma
\ref{dim}. The directions $\theta_n$ converge to
$\theta$, this shows that the direction of $\sigma_v^-$ is unique.
Now by convexity of $P$ the set $\sigma_v^-$ is convex as
intersection of convex sets. By definition the projection of
$\sigma_v^-$ on $\partial{P}$ is included inside an edge, thus it
is of dimension less than or equal to one. This implies that the set is an
interval or a point.
\end{proof}

{\em A priori} there are several cases as $dim\sigma_v^-$ can be
equal to $0$ or $1$. We see here a difference with the polygonal
case. In this case the dimension was always equal to zero.

\subsection{Orbits passing through several edges}
In this paragraph an edge means the edge which appears in the
unfolding of $P$ corresponding to $v$. We represent an edge by a
point and a vector. The point is a vertex of a copy of $P$ in the
unfolding and the vector is the direction of the edge. We consider
two edges $A,B$ in the unfolding. Consider $m\in A$  and a
direction $\theta$ such that the orbit of $(m,\theta)$ passes
through an edge. We identify the point $m$ with the distance
$d(m,a)$ if $a$ is one endpoint of the edge $A$. Moreover we denote by $u$ an unit 
vector colinear to the edge $A$.

\begin{lemma}\label{entro3eq}
The set of $(m,\theta), m\in A_0$  such that the orbit of $(m,\theta)$ passes through an edge $A_1$ satisfies either

(i) $(m,\theta)$ is in the line or plane which contains $A_0,A_1$.
Then there exists an affine map $f$ such that $f(\theta) = 0$.

or

(ii) there exists a map $F :\mathbb{R}^3 \to \mathbb{R}$  such
that $m=F(\theta)$ (it is the quotient of two linear polynomials).
Moreover the map $(A_0,A_1)\mapsto F$ is injective.
\end{lemma}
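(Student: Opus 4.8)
The plan is to work entirely in the unfolding, where the billiard trajectory becomes a straight line, and to reduce the geometric condition ``the orbit passes through $A_1$'' to a single algebraic equation that is linear in $m$. I would parametrize $A_0$ by signed arclength, writing the base point as $a+m\,u_0$ with $u_0=u$ a unit vector along $A_0$ and $a$ the chosen endpoint, so that the scalar $m$ is exactly the distance $d(m,a)$ of the identification in the statement. The trajectory is then the line $\{a+m\,u_0+t\theta : t\in\mathbb{R}\}$, and the target edge $A_1$ is the line $\{a_1+s\,u_1 : s\in\mathbb{R}\}$, where $(a_1,u_1)$ is the point--vector representation of $A_1$.

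The orbit passes through $A_1$ precisely when these two lines meet, i.e.\ when the connecting vector $a_1-a-m\,u_0$ lies in the plane spanned by the two directions $\theta$ and $u_1$. Writing this coplanarity as the vanishing of the scalar triple product gives
$$\theta\cdot\bigl(u_1\times(a_1-a)\bigr)\;-\;m\,\theta\cdot(u_1\times u_0)\;=\;0.$$
Setting $P(\theta)=\theta\cdot\bigl(u_1\times(a_1-a)\bigr)$ and $Q(\theta)=\theta\cdot(u_1\times u_0)$, both linear in $\theta$, the condition is simply $P(\theta)=m\,Q(\theta)$. Now I split on whether $Q(\theta)$ vanishes. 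If $Q(\theta)\neq0$ one solves $m=P(\theta)/Q(\theta)=:F(\theta)$, a quotient of two linear polynomials, which is case (ii). If $Q(\theta)=0$ the equation forces $P(\theta)=0$; taking $f=P$ gives an affine map with $f(\theta)=0$ and no constraint on $m$, which is case (i). Geometrically $Q(\theta)=0$ means $\theta\in\mathrm{span}(u_0,u_1)$, the degenerate configuration in which the direction lies in the plane of the two edge directions (for instance $\theta$ parallel to $A_1$, or the whole configuration coplanar), matching the description in (i).

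The substantive point is the injectivity of $(A_0,A_1)\mapsto F$. Here I would argue that the rational function $F=P/Q$ recovers the two linear forms $P$ and $Q$ up to a single common scalar, since they are coprime degree-one forms whenever $F$ is non-constant, i.e.\ whenever $u_1\times(a_1-a)$ and $u_1\times u_0$ are not parallel. Thus $F$ determines, up to one common factor, the two vectors $w=u_1\times(a_1-a)$ and $n=u_1\times u_0$. Both are orthogonal to $u_1$, so $u_1$ is recovered (up to sign) as the direction of $n\times w$ --- equivalently as the common line of the two planes $\{Q=0\}=\mathrm{span}(u_0,u_1)$ and $\{P=0\}=\mathrm{span}(u_1,a_1-a)$. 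From $n$ and $w$ one then reads off $u_0$ and $a_1-a$, and hence both edges, yielding injectivity.

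I expect this last step to be the main obstacle. The difficulty is that the cross products $u_1\times u_0$ and $u_1\times(a_1-a)$ record only the components of $u_0$ and of $a_1-a$ transverse to $u_1$: a priori $u_0$ is pinned down only modulo $u_1$, and the base point only up to sliding along $A_1$. Sliding the base point does not change $A_1$ as a line and is harmless, but to remove the ambiguity in $u_0$ and to conclude that genuinely distinct edge pairs cannot produce proportional $(P,Q)$ I would invoke the discrete structure of the unfolding --- that edge directions lie in the finite set generated by $G(P)$ and that base points are images of vertices --- rather than a purely generic-position argument. Pinning this down carefully is where the real work lies.
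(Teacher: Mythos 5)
Your derivation of the defining equation is correct and in fact more economical than the paper's: the triple-product condition $\langle u_1\times(a_1-a),\theta\rangle=m\,\langle u_1\times u_0,\theta\rangle$ packages in one line what the paper does in two steps, first writing $A_1$ as the intersection of two planes $h=g=0$ and solving for $\lambda$ and $m$ (the proof of Lemma \ref{entro3eq}), and only later reassembling the cross-product form of $F$ in Lemma \ref{entrogeom} --- so your computation proves that lemma en route. Your dichotomy $Q(\theta)\neq 0$ versus $Q(\theta)=0$ also correctly reproduces cases (ii) and (i): when $A_0,A_1$ are coplanar the forms $P$ and $Q$ are proportional and the constraint degenerates to a single linear equation $f(\theta)=0$, exactly the paper's case (i). (Like the paper, you conflate an edge with its supporting line, so two collinear edges give the same $F$; the paper disposes of this by the remark following the lemma.)

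The genuine gap is where you say the real work lies, and your proposed patch points in the wrong direction. First, the full-strength claim with $A_0$ varying is simply false: $F$ is invariant under translating the pair, since replacing $(a,a_1)$ by $(a+\delta,a_1+\delta)$ with the same $u_0,u_1$ changes neither $P$ nor $Q$, and translated copies of an edge pair genuinely occur in unfoldings (e.g.\ the cubic lattice arising from the cube), so no appeal to the discreteness of unfolding directions and vertices can rescue it. Second, and decisively, the paper neither proves nor uses that version: its proof of injectivity fixes $A_0$ and compares two target edges, and Corollary \ref{entro3inde} only ever compares maps $F_1,F_2,F_3$ built from a common $A_0$. Once $A_0$ is held fixed, your own reconstruction closes with no extra input: $u_0$ and $a$ are known, coprimality gives $(\tilde w,\tilde n)=\lambda(w,n)$ for an unknown scalar $\lambda$, the direction $u_1$ is recovered up to sign from $\tilde n\times\tilde w$, then $\lambda$ is determined from $\tilde n=\lambda\,u_1\times u_0$, and finally $w$ yields $a_1-a$ modulo $u_1$, i.e.\ the line of $A_1$ (the sign ambiguity in $u_1$ cancels). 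The paper instead argues by contradiction: if $F_{A_1}=F_{A_2}$ with $A_1\neq A_2$, every line through $A_0$ and $A_1$ would meet $A_2$; taking two such lines through one point $m\in A_0$, meeting $A_1$ in distinct points, forces $A_1$, and then $A_2$, into the plane they span, contradicting pairwise non-coplanarity. Either completion is a few lines; what is missing from your write-up is only the realization that $A_0$ must be held fixed, after which no rigidity of the unfolding is needed.
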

\begin{remark}
The case where $A_0,A_1$ are colinear is included in the first case. In this case there are 
two equations of the form $f(\theta)=0$ but we only use one of them.
\end{remark}
\begin{proof}
Consider the affine subspace generated by the edge $A_0$ and the
line $m+\mathbb{R}\theta$. There are two cases :

$\bullet\quad A_1\in Aff(A_0,m+\mathbb{R}\theta)$. Assume
$A_0,A_1$ are not colinear, then the affine space generated by
$A_0,A_1$  is of dimension two (or one), and several points $m$
can be associated to the same direction $\theta$. In the case it
is of dimension 2, $\theta$ is in the plane which contains
$A_0,A_1$. Then there exists an affine map $f$ which gives the
equation of the plane and we obtain $f(\theta)=0$.

$\bullet\quad A_1\notin Aff(A_0,m+\mathbb{R}\theta)$, then the
space $Aff(A_0,A_1)$ is of dimension three. If the direction is
not associated to a single point then the edges $A_0,A_1$ are
coplanar. Thus in our case the direction is associated to a single
point $m$. There exists a real number $\lambda$ such that
$m+\lambda\theta\in A_1$. Since $A_1$ is an edge, it is the
intersection of two planes (we take the planes of the two faces of
the polyhedron). We denote the two planes by the equations
$h=0;g=0$ where $h,g\quad \mathbb{R}^3\rightarrow \mathbb{R}$. We
obtain the system
$$h(m+\lambda\theta)=0,$$
$$g(m+\lambda\theta)=0.$$
Here  $h(x)=<v_h,x>+b_h$ where $v_h$ is a vector and
$<\cdot,\cdot>$ is the scalar product and similarly for $g$. Then
we write $h(m)=<v_h,mu>+b_h=m<v_h,u>+b_h$, we do the same thing
for $g$. Since $A_0,A_1$ are not coplanar the terms
$<v_g,\theta>,<v_h,\theta>$ are non null, thus we obtain the
expression for $\lambda$ :
$$\lambda=\frac{-b_h-m<v_h,u>}{<v_h,\theta>}=
\frac{-b_g-m<v_g,u>}{<v_g,\theta>}.$$ For a fixed $\theta$, there
can be only one point $m\in A_0$ which solves this equation,
otherwise we would be in case $(i)$. Thus we find $m=F(\theta)$
where $F$ is the quotient of two linear polynomials :
$$m=\frac{b_g<v_h,\theta>-b_h<v_g,\theta>}{<v_h,u><v_g,\theta>-<v_g,u><v_h,\theta>}\quad (*).$$
Note that $F$ does not depend on
the concrete choices of the planes $h,g$, but only on the edges $A_0,A_1$.

We prove the last point by contradiction. If we have the same
equation for two edges, it means that all the lines which pass
through two edges pass through the third. We claim it implies
that the three edges $A_0,A_1,A_2$ are coplanar : the first case
is when $A_1,A_2$ are coplanar. Then the assumption implies that
the third is coplanar, contradiction. Now assume that the three
edges are pairwise not coplanar. Indeed consider a first line
which passes through the three edges. Call $m$ the point on $A_0$,
and $u$ the direction. Now consider a line which contains $m$ and passes through
$A_1$ with a different direction. Those two lines intersect $A_1$,
thus $m$ and the two lines are coplanar. Since $A_2$ is not
coplanar with $A_0$, both lines can not intersect $A_2$,
contradiction. To finish consider the case when two edges are colinear but the third one is not colinear with either of the other two. This case can be reduced to the first case by looking at the first and third edges.   
\end{proof}

\begin{lemma}\label{entrogeom}
Consider two edges $A_0,A_i$ which give the equation $m=F_i(\theta)$. Denote by $p_i$ a
point on $A_i$ and $x_i$ the direction of the line $A_i$. Then we have
$$F_i(\theta)=\frac{<p_i\wedge x_i,\theta>}{<u\wedge x_i,\theta>},$$
where $u$ is an unit vector colinear to the edge $A_0$.
\end{lemma}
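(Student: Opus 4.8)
The plan is to derive the formula directly from the explicit expression $(*)$ obtained in the proof of Lemma \ref{entro3eq}, by recognizing its numerator and denominator as triple products. Recall that in case $(ii)$ the edge $A_i$ is cut out by the two planes $h=<v_h,\cdot>+b_h=0$ and $g=<v_g,\cdot>+b_g=0$, and that
$$F_i(\theta)=\frac{b_g<v_h,\theta>-b_h<v_g,\theta>}{<v_h,u><v_g,\theta>-<v_g,u><v_h,\theta>}.$$
First I would record the two facts that translate between the data $(h,g)$ and the data $(p_i,x_i)$: the direction of the line $A_i$ is orthogonal to both normals, so that $x_i$ is proportional to $v_h\wedge v_g$; and since $p_i\in A_i$ it satisfies $<v_h,p_i>=-b_h$ and $<v_g,p_i>=-b_g$.

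Next I would expand the two numerators of the target formula using the vector triple product identity $a\wedge(b\wedge c)=b<a,c>-c<a,b>$, with $x_i=v_h\wedge v_g$. This gives
$$p_i\wedge(v_h\wedge v_g)=v_h<p_i,v_g>-v_g<p_i,v_h>=-b_g\,v_h+b_h\,v_g,$$
$$u\wedge(v_h\wedge v_g)=<u,v_g>\,v_h-<u,v_h>\,v_g.$$
Taking the inner product of each with $\theta$ reproduces, up to an overall sign, the numerator and denominator of $(*)$:
$$<p_i\wedge x_i,\theta>=-\bigl(b_g<v_h,\theta>-b_h<v_g,\theta>\bigr),$$
$$<u\wedge x_i,\theta>=-\bigl(<v_h,u><v_g,\theta>-<v_g,u><v_h,\theta>\bigr).$$
The two signs cancel in the quotient, yielding exactly $F_i(\theta)=<p_i\wedge x_i,\theta>/<u\wedge x_i,\theta>$.

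There is no genuine obstacle here, as the whole content is the triple-product identity; the one point that must be handled with care is well-definedness of the right-hand side. The vector $x_i$ and the pair of normals $(v_h,v_g)$ are each determined only up to a nonzero scalar, and $p_i$ only up to adding a multiple of $x_i$. Scaling $x_i$ by $\lambda$ multiplies both $<p_i\wedge x_i,\theta>$ and $<u\wedge x_i,\theta>$ by $\lambda$, so the quotient is unchanged; replacing $p_i$ by $p_i+t x_i$ adds $t\,(x_i\wedge x_i)=0$ to the numerator. Hence the expression depends only on the line carrying $A_i$ and on the fixed vector $u$ along $A_0$, which is consistent with the injectivity of $(A_0,A_1)\mapsto F$ stated at the end of Lemma \ref{entro3eq}. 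I would therefore present the computation with $x_i$ taken to be $v_h\wedge v_g$ to fix the orientation, and then remark that the displayed identity is invariant under the admissible rescalings of $x_i$, $v_h$, $v_g$ and the translation of $p_i$ along the edge.
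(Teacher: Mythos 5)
Your proof is correct, and it reaches the formula from the same starting point as the paper --- the explicit expression $(*)$ from the proof of Lemma \ref{entro3eq} --- but the execution is genuinely more direct and in fact tighter than the paper's. The paper first argues by orthogonality that the denominator vector $-\langle v_{h_i},u\rangle v_{g_i}+\langle v_{g_i},u\rangle v_{h_i}$ must be colinear to $u\wedge x_i$ and the numerator vector $b_{h_i}v_{g_i}-b_{g_i}v_{h_i}$ colinear to $p_i\wedge x_i$, introducing proportionality constants $C_i,C_i'$, and then tries to normalize them away by choosing orthonormal normals and comparing norms; that norm computation only yields $(C_i')^2=1$, leaving a sign undetermined, which the paper dismisses by ``choosing the proper orientation of $x_i$'' (and its intermediate identity for $u\wedge(v_{g_i}\wedge v_{h_i})$ itself has a sign slip). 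Your direct expansion of $p_i\wedge(v_h\wedge v_g)$ and $u\wedge(v_h\wedge v_g)$ via the triple-product identity produces both vectors exactly --- each equal to minus the corresponding expression in $(*)$, with the two signs cancelling in the quotient --- so no undetermined constants appear and the sign question is resolved rather than sidestepped. Your closing well-definedness check (invariance under rescaling $x_i$, including sign flips, and under translating $p_i$ along the edge) is a worthwhile addition the paper omits, since the statement of the lemma leaves $p_i$ and $x_i$ arbitrary. One small point you use silently: $v_h\wedge v_g\neq 0$ because the two planes meet in the edge, and the denominator $\langle u\wedge x_i,\theta\rangle$ is nonzero precisely in case $(ii)$ of Lemma \ref{entro3eq}, where $\langle v_h,\theta\rangle,\langle v_g,\theta\rangle\neq 0$ and $A_0,A_i$ are not coplanar; a one-line pointer to that would make the argument airtight.
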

\begin{proof}
By Lemma \ref{entro3eq} each $F_i$ is the quotient of two
polynomials. Consider the denominator of $F_i$ as function of
$\theta$ ( we use the notations of the preceding proof). By  equation $(*)$ we obtain: 
$$F_i(\theta)=\frac{N(\theta)}{D(\theta)},$$
$$D(\theta)= -<v_{h_i},u><v_{g_i},\theta>+<v_{g_i},u><v_{h_i},\theta>.$$ We
remark for the map $F_i$ that
$$-<v_{h_i},u>v_{g_i}+<v_{g_i},u>v_{h_i},$$ is orthogonal to $u$ and to $x_i$. Thus this vector is colinear to
$u\wedge x_i$ :
$$-<v_{h_i},u>v_{g_i}+<v_{g_i},u>v_{h_i}=C_iu\wedge x_i.$$

Consider the numerator $(b_{h_i}v_{g_i}-b_{g_i}v_{h_i},\theta)$ of
$F_i$. The scalar product of $b_{h_i}v_{g_i}-b_{g_i}v_{h_i}$ with
$x_i$ is null, moreover the scalar product with $p_i$ equals again
zero by definition of $v_{g_i},b_{g_i},v_{h_i},b_{h_i}$. Thus we
obtain :
\begin{equation}
b_{h_i}v_{g_i}-b_{g_i}v_{h_i}=C'_ip_i\wedge x_i,
\end{equation}
and :
$$F(\theta)=\frac{C'_i}{C_i}\frac{<p_i\wedge x_i,\theta>}{<u\wedge x_i,\theta>}.$$
We claim that $C_i=C'_i=1$. We can choose the vectors $v_{g_i},v_{h_i}$ such that they are orthogonal
and of norm 1. Then $x_i$ is colinear to
$v_{g_i}\wedge v_{h_i}$ and is of norm one, thus if we choose the proper orientation of $x_i$ they are equal.
Then we can have
$$-<v_{h_i},u>v_{g_i}+<v_{g_i},u>v_{h_i}=u\wedge(v_{g_i}\wedge v_{h_i})=u\wedge x_i.$$
Thus we deduce $K_i'=1$.

Now we compute the norm of the vector of the numerator
$|b_{h_i}v_{g_i}-b_{g_i}v_{h_i}|^2=b_{h_i}^2+b_{g_i}^2$. By
definition of $b_{g_i}, b_{h_i},p_i$ we obtain
$$b_{g_i}=-<v_{g_i},p_i>; b_{h_i}=-<v_{h_i},p_i>.$$
Thus we have $|b_{h_i}v_{g_i}-b_{g_i}v_{h_i}|^2=<v_{g_i}|p_i>^2+<v_{h_i}|p_i>^2$. Moreover by definition we have
that $x_i=v_{g_i}\wedge v_{h_i}$ this implies
that $|p_i\wedge x_i|^2=<v_{g_i},p_i>^2+<v_{h_i},p_i>^2$. Finally we deduce $$|p_i\wedge x_i|^2(C'_{i})^2=
|p_i\wedge x_i|^2.$$
\end{proof}

\begin{lemma}\label{eqdim3}
Consider three edges $A_0,A_1,A_2$ such that $dim Aff(A_i,A_j)=3$ for all $i,j$. Then the sets of lines $d$ which
pass through
$A_0,A_1,A_2$ is contained in a surface which we call $S(A_0,A_1,A_2)$. Consider an orthonormal basis such that the
direction $u$ of $A_0$ satisfies
$u=\begin{pmatrix}1\\0 \\ 0\end{pmatrix}$. If we call $(P_1,P_2,P_3)$ the coordinates of a point on this surface, then

$(i)$ the equation of the surface can be written as $P_1=f(P_2,P_3)$, where $f$ is a polynomial.

$(ii)$ there exists $N\leq 4$ such that any line which is not contained in $S$ intersects $S$ at most $N$ times.
\end{lemma}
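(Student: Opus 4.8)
The plan is to reduce the statement to the algebra already prepared in Lemmas \ref{entro3eq} and \ref{entrogeom}, and then to exploit the special position $u=(1,0,0)$ of the basis. Since $\dim\mathrm{Aff}(A_0,A_j)=3$ for $j=1,2$, each edge $A_j$ falls in case $(ii)$ of Lemma \ref{entro3eq}, so a line issued from $m\in A_0$ in direction $\theta$ meets $A_j$ exactly when $m=F_j(\theta)$, with $F_j$ given by Lemma \ref{entrogeom}. Hence a line meets both $A_1$ and $A_2$ iff $F_1(\theta)=F_2(\theta)$, and I would first record this as a single homogeneous equation in the direction: clearing denominators in
\[
\frac{\langle p_1\wedge x_1,\theta\rangle}{\langle u\wedge x_1,\theta\rangle}=\frac{\langle p_2\wedge x_2,\theta\rangle}{\langle u\wedge x_2,\theta\rangle}
\]
yields a homogeneous quadratic $C(\theta)=0$. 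Thus the admissible directions form a conic, each admissible direction carries a unique transversal line, and $S(A_0,A_1,A_2)$ is the union of these lines.

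The key observation for $(i)$ is that, in the chosen basis, $u\wedge x_j=(0,-x_{j3},x_{j2})$, so each form $\langle u\wedge x_j,\theta\rangle$ is free of $\theta_1$. Consequently $C(\theta)$ carries no $\theta_1^2$ term and is affine in $\theta_1$, so I can solve $\theta_1=-B(\theta_2,\theta_3)/A(\theta_2,\theta_3)$ with $A$ linear and $B$ quadratic in $(\theta_2,\theta_3)$. Parametrising a point of $S$ as $X=a+F_1(\theta)u+t\theta$ and using $P_2-a_2=t\theta_2$, $P_3-a_3=t\theta_3$ to recover $[\theta_2:\theta_3]$ from $(P_2,P_3)$, the absence of $\theta_1^2$ translates into the absence of a $P_1^2$ term in $S$: every line in direction $u$ meets $S$ at most once, so $S$ is a graph over the $(P_2,P_3)$-plane. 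Substituting $\theta_1=-B/A$ and the expression for $F_1$, and simplifying with the orthogonality relations established in the proof of Lemma \ref{entrogeom} (the combinations defining the numerator and denominator of $F_j$ are orthogonal to $x_j$, hence collinear with $p_j\wedge x_j$ and $u\wedge x_j$), I expect the spurious denominators to cancel and to obtain the polynomial form $P_1=f(P_2,P_3)$ of $(i)$.

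For $(ii)$ I would clear denominators in $P_1=f(P_2,P_3)$ to obtain a single polynomial equation $g(P_1,P_2,P_3)=0$ cutting out $S$, and bound its total degree by $4$ using the degrees found above (a factor $P_1$ of degree one against the degree-$\le 2$ denominators $A\cdot\langle u\wedge x_1,\theta\rangle$ and a numerator of degree $\le 3$). Then for any line $\ell$ not contained in $S$, substituting an affine parametrisation $s\mapsto\ell(s)$ into $g$ produces a nonzero univariate polynomial of degree at most $4$; its number of roots, hence the number of intersection points of $\ell$ with $S$, is at most $N\le 4$.

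The main obstacle is the bookkeeping behind $(i)$: proving that $f$ is genuinely a polynomial, and not merely a rational function, requires showing that the linear factors $A(\theta_2,\theta_3)$ and $\langle u\wedge x_1,\theta\rangle$ appearing in the denominators cancel against the numerator, which is precisely the content of the vector identities in Lemma \ref{entrogeom}. A secondary point is to dispose of the degenerate directions, where $A(\theta_2,\theta_3)=0$ (the direction $u$ itself) or where a transversal becomes tangent to the configuration; these are handled using the standing hypothesis $\dim\mathrm{Aff}(A_i,A_j)=3$, which forbids any two of the edges from being coplanar and guarantees that the conic $C=0$ is nondegenerate, so that the parametrisation of $S$ is well defined off a one-dimensional locus.
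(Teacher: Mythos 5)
Your overall route coincides with the paper's: both proofs start from the two compatibility equations $m=F_1(\theta)=F_2(\theta)$ supplied by Lemmas \ref{entro3eq} and \ref{entrogeom}, exploit the fact that in the basis with $u=e_1$ the denominators $\langle u\wedge x_j,\theta\rangle$ contain no $\theta_1$ (so that $F_1-F_2=0$, cleared of denominators, is affine in $\theta_1$ and can be solved, after substituting $\theta_3/\theta_2=P_3/P_2$ from $P=m+\lambda\theta$, as $\theta_1/\theta_2=$ quadratic over linear in $(P_2,P_3)$), and then back-substitute to get $P_1$ as a function of $(P_2,P_3)$; for $(ii)$ both clear denominators to a polynomial equation of total degree $4$ and plug in an affine parametrisation of a transversal line. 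The paper additionally splits into the cases $P_2\neq 0$ and $P_2=0$ and disposes of the vanishing of the coefficient of $\theta_1/\theta_2$ by observing that it would force the configuration into a plane, contradicting $\dim\mathrm{Aff}(A_i,A_j)=3$; your closing remark about degenerate directions plays the same role.

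However, the step you single out as ``the main obstacle'' --- proving that the denominators cancel so that $f$ is genuinely a polynomial --- is not bookkeeping you can close: the cancellation does not occur in general. The transversals of three pairwise skew lines sweep a quadric containing the line of $A_0$; since that quadric contains $\{P_2=P_3=0\}$, its equation has no $P_1^2$ term and reads $P_1L(P_2,P_3)+M(P_2,P_3)=0$ with $L$ linear and $M$ quadratic, whence $f=-M/L$, and $L$ divides $M$ only when the quadric degenerates into planes, which skewness excludes. Concretely, for the quadric $P_3=P_1P_2$ with $A_0$ the $P_1$-axis one gets $f(P_2,P_3)=P_3/P_2$. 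Note that the paper's own proof in fact concludes only that $f$ is a ``homogeneous rational map'' of two variables, silently contradicting the word ``polynomial'' in statement $(i)$; the usable content, and the only part invoked later (Corollary \ref{entro3inde}, Proposition \ref{entroscruc}), is that $S(A_0,A_1,A_2)$ lies in the zero set of a single polynomial $g(P_1,P_2,P_3)$ of degree at most $4$, linear in $P_1$. Your part $(ii)$ uses only $g$ and is correct as written (the quadric picture would even give $N\le 2$). So drop the attempted cancellation, state $(i)$ as a rational graph or as the polynomial equation $g=0$, and the rest of your argument matches the paper's.
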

\begin{proof}
Consider a line $d=m+\mathbb{R}\theta, m\in A_0$ which passes through $A_1,A_2$. By Lemma \ref{entro3eq} we obtain
two equations $m=F_i(\theta)$. Then Lemma
\ref{entrogeom} implies that $F_i(\theta)=\frac{\sum a_{j,i}\theta_j}{\sum_{j=2}^{3}b_{i,j}\theta_j}$.
Now call $P_i$ the coordinates of a point $P$ on $d$. We have $P=m+\lambda\theta$, thus we obtain

$$\begin{cases}P_1=\frac{a_{1}\theta_1+a_2\theta_2+a_3\theta_3}{b_2\theta_2+b_3\theta_3}+\lambda\theta_1\\ P_2=\lambda
\theta_2\\ P_3=
\lambda\theta_3\\ (F_1-F_2)(\theta)=0
\end{cases}$$
where $a_j=a_{j,1}$ and $b_j=b_{1,j}$.\\
$\bullet$ First case $P_2\neq 0$. This is equivalent to $\theta_2\neq 0$.

$$\begin{cases}P_1=\frac{a_{1}\theta_1+a_2\theta_2+a_3\theta_3}{b_2\theta_2+b_3\theta_3}+\lambda\theta_1\\ P_2=\lambda
\theta_2\\ \theta_3=\frac{P_3}
{P_2}\theta_2\\ (F_1-F_2)(\theta)=0
\end{cases}$$

$$\begin{cases}P_1=\frac{a_{1}\theta_1+\theta_2(a_2+a_3\frac{P_3}{P_2})}{\theta_2(b_2+\frac{P_3}{P_2})}+
P_2\frac{\theta_1}{\theta_2}\\ P_2=\lambda \theta_2\\
\theta_3=\frac{P_3}{P_2}\theta_2 \\ (F_1-F_2)(\theta)=0
\end{cases}$$
  $$\begin{cases}P_1=\frac{a_{1}}{(b_2+\frac{P_3}{P_2})}\frac{\theta_1}{\theta_2}+\frac{a_2+
  a_3\frac{P_3}{P_2}}{b_2+\frac{P_3}{P_2}}+
P_2\frac{\theta_1}{\theta_2}\\ P_2=\lambda
\theta_2\\
\theta_3=\frac{P_3}{P_2}\theta_2 \\ (F_1-F_2)(\theta)=0
\end{cases}$$
 $$\begin{cases}P_1=(\frac{a_{1}}{b_2P_2+P_3}+1)P_2\frac{\theta_1}{\theta_2}+\frac{a_2P_2+a_3P_3}{b_2P_2+P_3}\\
 P_2=\lambda
\theta_2\\
\theta_3=\frac{P_3}{P_2}\theta_2 \\ (F_1-F_2)(\theta)=0
\end{cases}$$
Now the equation $(F_1-F_2)(\theta)=0$ can be written as
$$(\sum_{j=1}^3 a_{j}\theta_j)(\sum_{j=2}^{3}b'_{j}\theta_j)=(\sum_{j=1}^3 a'_{j}\theta_j)(\sum_{j=2}^{3}b_{j}\theta_j),$$
where $a'_j=a_{j,2}$ and $b'_j=b_{2,j}$.\\
$$(a_{1}\theta_1+a_{2}\theta_2+a_{3}\theta_3)(b'_{2}\theta_2+b'_{3}\theta_3)=
(a'_{1}\theta_1+a'_{2}\theta_2+a'_{3}\theta_3)(b_{2}\theta_2+b_{2}\theta_3).$$
With the equation $\theta_3=\frac{P_3}{P_2}\theta_2$ we obtain an equation of the following form.

\begin{gather*}
(a_1\theta_1P_2+(a_2P_2+a_3P_3)\theta_2)(b'_2P_2+b'_3P_3)=\\
(a'_1\theta_1P_2+(a'_2P_2+a'_3P_3)\theta_2)(b_2P_2+b_3P_3).\\
(a_1\theta_1/\theta_2P_2+(a_2P_2+a_3P_3))(b'_2P_2+b'_3P_3)=\\
(a'_1\theta_1/\theta_2P_2+(a'_2P_2+a'_3P_3))(b_2P_2+b_3P_3).
\end{gather*}
Thus we obtain the value of $\frac{\theta_1}{\theta_2}$.
\begin{gather*}
\theta_1/\theta_2[a_1(b'_2P_2+b'_3P_3)-a'_1(b_2P_2+b_3P_3)]P_2=\\
(a'_2P_2+a'_3P_3)(b_2P_2+b_3P_3)-(a_2P_2+a_3P_3)(b'_2P_2+b'_3P_3).
\end{gather*}

If the coefficient of $\frac{\theta_1}{\theta_2}$ is null we obtain an equation of the form $P_2=KP_3$. This
implies that $P$ is on a plane. It is impossible
since the lines $A_i$ are non coplanar. Thus we can obtain the value of $\frac{\theta_1}{\theta_2}$.
Then the first line of the system gives an equation of the form
$$f(P_2, P_3)=P_1,$$
where $f$ is a homogeneous rational map of twp variables.

$\bullet$ Second case $P_2=0$. We obtain

$$\begin{cases}P_1=\frac{a_{1}\theta_1+a_3\theta_3}{b_3\theta_3}+\lambda\theta_1\\ P_3=\lambda\theta_3\\
(F_1-F_2)(\theta)=0
\end{cases}$$
Remark that $P_3\neq0$. Indeed if not the direction is included in $A_0$. Thus the system becomes
$$\begin{cases}P_1=\frac{a_{1}\theta_1+a_3\theta_3}{b_3\theta_3}+\lambda\theta_1\\ \lambda=P_3/\theta_3\\
(F_1-F_2)(\theta)=0
\end{cases}$$
$$\begin{cases}P_1=\frac{a_{1}\theta_1+a_3\theta_3}{b_3\theta_3}+P_3/\theta_3\theta_1\\ P_3/\theta_3=\lambda\\
(F_1-F_2)(\theta)=0
\end{cases}$$
And the equation $(F_1-F_2)(\theta)=0$ gives as in the first case the values of $\frac{\theta_1}{\theta_3}$.

$\bullet$ Now consider a transversal line $d'$. A point on this line depends on one parameter. If the point is
on the surface, the parameter verifies a polynomial
equation of degree four, thus there are a bounded number of solutions.
\end{proof}

\begin{corollary}\label{entro3inde}
Consider four edges $A_0,A_1,A_2,A_3$ two by two non coplanar such that $A_3\notin S(A_0,A_1,A_2)$. Then the maps
$F_1-F_2, F_1-F_3$ are linearly
independent.
\end{corollary}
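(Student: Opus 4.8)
The plan is to argue by contradiction. Suppose $F_1-F_2$ and $F_1-F_3$ are linearly dependent, so that $\alpha(F_1-F_2)+\beta(F_1-F_3)=0$ for some $(\alpha,\beta)\neq(0,0)$. Since $A_1,A_2,A_3$ are distinct edges, the injectivity of the map $(A_0,A_i)\mapsto F_i$ from Lemma~\ref{entro3eq} shows that none of $F_1-F_2$, $F_1-F_3$, $F_2-F_3$ is identically zero; in particular neither $\alpha$ nor $\beta$ can vanish, and $\alpha+\beta\neq 0$ (otherwise the relation would collapse to $F_2=F_3$). Dividing through, I may therefore write $F_1-F_2=c\,(F_1-F_3)$ for a constant $c\neq 0$.

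Next I would translate this scalar relation into a statement about lines. Recall from Lemma~\ref{entrogeom} that $F_i(\theta)=\langle p_i\wedge x_i,\theta\rangle/\langle u\wedge x_i,\theta\rangle$ is the base point on $A_0$ of the line $m+\mathbb{R}\theta$ meeting $A_i$, so the vanishing of $F_1-F_i$ at a direction $\theta$ records exactly that the line through $A_0$ and $A_1$ in direction $\theta$ also passes through $A_i$; this is the direction set cut out by the surface $S(A_0,A_1,A_i)$ of Lemma~\ref{eqdim3}. The relation $F_1-F_2=c(F_1-F_3)$ with $c\neq 0$ forces the two zero loci in direction space to coincide, and at every such $\theta$ one gets $F_1(\theta)=F_2(\theta)=F_3(\theta)$. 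Hence each line through $A_0,A_1,A_2$ has the same base point and direction as a line through $A_0,A_1,A_3$, so it is literally the same line and must pass through $A_3$ as well.

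To finish, I would use that the lines through the pairwise non-coplanar edges $A_0,A_1,A_2$ form a genuine one-parameter family ruling the surface $S(A_0,A_1,A_2)$. Since every one of these lines also meets $A_3$, and such a one-parameter family meets the fixed line $A_3$ in infinitely many distinct points (the alternative, that they all pass through finitely many points of $A_3$, would confine them to finitely many planes and violate the non-coplanarity of the edges), the edge $A_3$ meets $S(A_0,A_1,A_2)$ in more than $N\leq 4$ points. By Lemma~\ref{eqdim3}(ii) this is possible only if $A_3\subset S(A_0,A_1,A_2)$, contradicting the hypothesis $A_3\notin S(A_0,A_1,A_2)$. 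The step I expect to be the main obstacle is precisely this last passage—upgrading ``the same set of directions'' to ``$A_3$ lies on the surface''—since it requires excluding the degenerate possibility that the common lines meet $A_3$ in only finitely many points; the bounded-intersection estimate of Lemma~\ref{eqdim3}(ii), combined with the non-coplanarity hypotheses, is exactly what closes this gap.
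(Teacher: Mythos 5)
Your proof is correct and follows essentially the same route as the paper: argue by contradiction, show that linear dependence forces every line through $A_0,A_1,A_2$ to pass through $A_3$ as well, and then invoke Lemma~\ref{eqdim3} to conclude $A_3$ lies in $S(A_0,A_1,A_2)$, contradicting the hypothesis. You in fact give more detail than the paper's own terse proof, which asserts the final implication directly; your use of the bounded-intersection bound of Lemma~\ref{eqdim3}$(ii)$, together with the observation that the infinitely many transversals meet $A_3$ in infinitely many distinct points, makes explicit a step the paper leaves implicit.
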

\begin{proof}
We make the proof by contradiction. If the maps $F_1-F_2,F_1-F_3$
are linearly dependent, it means that $F_3$ is a linear
combination of $F_1,F_2$. It implies that the system
$\begin{cases}m=F_1(\theta)\\m=F_2(\theta)\\m=F_3(\theta)\end{cases}$
is equivalent to
$\begin{cases}m=F_1(\theta)\\m=F_2(\theta)\end{cases}$. Thus each
line which passes through $A_0,A_1,A_2$ must passes through $A_3$.
By preceding Lemma it implies that $A_3$ is in $S(A_0,A_1,A_2)$,
contradiction.
\end{proof}

\subsection{Key point}

\begin{lemma}\label{3Ka}
Consider a point $(m,\theta)\in \overline{E_0}$; then the set of words $v$ such that $(m,\theta)\in
\sigma_v^-$ is at most countable.
\end{lemma}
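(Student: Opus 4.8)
The plan is to show that the set of coding words $v$ with $(m,\theta)\in\sigma_v^-$ is countable by exploiting the geometric rigidity established in the previous lemmas. The key observation is that a word $v$ is determined by the sequence of edges (in the unfolding of $P$) that the orbit of $(m,\theta)$ passes through. Since $(m,\theta)\in\overline{E_0}$, this is a limit of genuine orbits, and each such orbit, being coded by a point in $\phi(E_0)$, has a well-defined itinerary. The aim is to bound how many distinct edge-sequences can be compatible with a single fixed pair $(m,\theta)$.

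\emph{First} I would fix the base point and direction and analyze the constraint that passing through a given edge $A_i$ imposes. By Lemma~\ref{entro3eq}, whenever the orbit of $(m,\theta)$ passes through an edge $A_i$ (in the non-degenerate case (ii)), the relation $m=F_i(\theta)$ holds, and crucially the map $(A_0,A_i)\mapsto F_i$ is injective. \emph{Next}, for the fixed direction $\theta$, I would argue that only finitely many edges of a single copy of the polyhedron can lie on the line $m+\mathbb{R}\theta$; as we unfold $P$ along the trajectory, at each step there are at most $\mathrm{card}(\mathcal{A})$ choices of face, but the geometry forces the positions to be rigidly determined. The heart of the matter is that, given $(m,\theta)$ fixed, the sequence of edges actually hit by the line $m+\mathbb{R}[\theta]$ is essentially forced: each successive intersection point is the next crossing of the line with an edge of the current unfolded copy, so the continuation is unique \emph{unless} the line passes exactly through a singular configuration (a vertex, or an edge in the degenerate coplanar case~(i) of Lemma~\ref{entro3eq}).

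\emph{The main obstacle} is precisely the ambiguity at singularities: since $(m,\theta)$ lies in $\overline{E_0}$ rather than $E_0$, the orbit may strike edges or vertices where the coding branches, and a priori each such branch point could double the number of admissible words. I would control this by noting that the line $m+\mathbb{R}[\theta]$ is a one-dimensional object, so the set of parameters $\lambda$ at which $m+\lambda[\theta]$ meets some edge of some unfolded copy of $P$ is a countable subset of $\mathbb{R}$ (the unfolding produces countably many copies of $P$, each with finitely many edges). At each of these countably many branch parameters there are only finitely many ways to continue the coding (bounded by the number of faces meeting at that location). Therefore the tree of possible itineraries is finitely branching at each of countably many nodes and locally forced elsewhere, so the total number of infinite itineraries it carries is at most countable.

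\emph{To finish}, I would assemble these observations into the counting argument: a word $v$ with $(m,\theta)\in\sigma_v^-$ corresponds to choosing, at each of the countably many singular crossings, one of finitely many continuations, and the non-singular segments are determined uniquely by the geometry. The set of such choice-sequences injects into a countable union of finite products, hence is countable. The only delicate point requiring care is justifying that away from the countable singular set the coding is genuinely single-valued, which follows from the uniqueness of the intersection point in case~(ii) of Lemma~\ref{entro3eq} together with the injectivity of $(A_0,A_i)\mapsto F_i$, ensuring distinct edge-pairs cannot produce the same constraint on a generic $(m,\theta)$.
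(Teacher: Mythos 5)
The paper does not actually prove this lemma: it is dispatched with a one-line citation to Katok \cite{Ka}, together with the remark that the argument is dimension-independent. So the relevant question is whether your blind attempt is itself a valid proof, and it is not: there is a genuine gap at the decisive counting step. You correctly reduce the problem to a tree of itineraries that is forced along regular stretches and branches finitely at each singular crossing, with countably many crossing parameters $\lambda$ in total. But your conclusion that ``the tree of possible itineraries is finitely branching at each of countably many nodes and locally forced elsewhere, so the total number of infinite itineraries it carries is at most countable'' is a false implication: the full binary tree has countably many nodes, each branching into only two children, yet it carries $2^{\aleph_0}$ infinite paths. Equivalently, your final injection ``into a countable union of finite products'' only accounts for itineraries making \emph{finitely} many branch choices; an itinerary whose orbit meets singularities infinitely often corresponds to a point of an infinite product of finite sets, which is in general uncountable (a Cantor set of codes). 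Nothing in your argument rules out a perfect subtree of codes through the fixed $(m,\theta)$, and that is exactly the hard content of the lemma.

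Note that the infinitely-branching case is not a negligible degeneracy in this context: the lemma is invoked for measures supported on $\Sigma\setminus\phi(E_0)$, whose shift-orbits return to the singular set infinitely often, so the codes one must count are precisely those with infinitely many singular crossings. Whatever mechanism excludes uncountable branching must come from the billiard geometry, and this is what Katok's argument in \cite{Ka} supplies and what your sketch does not reconstruct. Two smaller points: your appeal to Lemma \ref{entro3eq} (the equations $m=F(\theta)$ and the injectivity of $(A_0,A_1)\mapsto F$) is a red herring here --- uniqueness of the coding along regular stretches is just continuity of $T$ on $E_0$ and needs none of that machinery --- and it also ties your argument to $\mathbb{R}^3$, whereas the statement, as the paper emphasizes, holds in any dimension.
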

For the proof we refer to \cite{Ka}. This proof does not depend on the dimension.

\subsubsection{Definitions}
For a fixed word $v\in\Sigma\setminus\phi(E_0)$, the set
$\sigma_v^-$ is of dimension 0 or 1 and the direction $\theta$ is
unique, see Lemma \ref{entro3gkt}. Fix a word
$v\in\Sigma\setminus\phi(E_0)$, we will consider several cases:

$\bullet$ First $\sigma_v^-$ is an interval with endpoints $a,b$. For any
$m\in]a,b[$ we consider the set of discontinuities met in
the unfolding of $(m,\theta)$. 
This set is independent of
$m\in ]a,b[$ since $\sigma_v^-$ is an interval. We denote it $Disc(v,int)$. 
 If the endpoint $a$ (resp. $b$) is included in the interval then the orbit of
$(m,\theta)$ can meet other discontinuities. 
We call $Disc(v,a)$ (resp. $Disc(v,b)$) the set of those discontinuities.

$\bullet$ If $\sigma_v^-$ is a point it is the same method  as $Disc(v,int)$, we denote the set of
discontinuities by $Disc(v,int)$.

Here there are two sorts of
discontinuities.  First the singularity is a point of the boundary
of a face whose code contributes to $v$. 
Then the orbit is not transverse to the edge. 
Secondly they meet in the transversal
sense. If the orbit is included in an edge, then the
discontinuities met are the boundary points of that edge (and
similarly if the orbit is in a face).

\begin{definition}
% For a word $v$ we denote $Disc(v)$ the set
% $$Disc(v,int)\cup Disc(v,a)\cup Disc(v,b).$$
Let $V=\Sigma\setminus\phi(E_0)$ and $X\subset V$ be the set of
$v\in V$ such that the union of the elements $A_i$ of
$Disc(v,int),Disc(v,a),Disc(v,b)$ are contained in a finite union
of hyperplanes and of surfaces $S(A_0,A_1,A_2)$.
\end{definition}
Suppose $v\in X$. Let $N(\sigma_v^-)$ be the number of planes
containing $Disc(v)$ if $\sigma_v^-$ is a point or $Disc(v,a)$ or
$Disc(v,b)$ if $\sigma_v^-$ is an interval.

In the following Lemma the function $L$ refers to the width of the strip of singular orbits as it does
in the proof of Lemma \ref{lemka}.
\begin{lemma}\label{entroprobpaq}
Suppose $\mu$ is  an ergodic measure with support in $\Sigma \backslash\phi(E_0)$. Then

$(i)$ there exists a constant $L$ such that $L(\sigma_v^-)=L$ for
$\mu$-a.e. $v\in \Sigma$ and thus for $\mu$-a.e $v,w \in \Sigma$
if $w_i = v_i$ for $i \ge 0$ then $\sigma_w = \sigma_v$.

$(ii)$ there exists a constant $N$ such that $N(\sigma_v^-)=N$ for $\mu$-a.e $v\in\Sigma$.
\end{lemma}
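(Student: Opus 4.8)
The plan is to run, for each of the two quantities $L$ and $N$, the same scheme as in the proof of Lemma~\ref{lemka}: exhibit the quantity as a monotone function along the shift $S$ and then invoke ergodicity of $\mu$ to force it to be constant $\mu$-almost everywhere.

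For part $(i)$, recall from Lemma~\ref{entro3gkt} that for every $v\in\Sigma\setminus\phi(E_0)$ the set $\sigma_v^-$ is a strip, i.e.\ an interval possibly degenerate to a point. First I would record the two facts already used for Lemma~\ref{lemka}: the inclusion $\sigma_{S^{-1}v}^-\subset T^{-1}\sigma_v^-$, equivalently $T\sigma_v^-\subset\sigma_{Sv}^-$, and the fact that $T$ is a piecewise isometry, so that $L(T\sigma_v^-)=L(\sigma_v^-)$. Combining them gives
$$L(\sigma_v^-)=L(T\sigma_v^-)\le L(\sigma_{Sv}^-),$$
so $L$ is a sub-invariant of $S$. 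Since $\mu$ is $S$-invariant, a bounded sub-invariant function satisfies $L\circ S=L$ almost everywhere, and ergodicity then forces $L$ to equal a constant $L$ for $\mu$-a.e.\ $v$. Equality of widths turns the inclusion $T\sigma_v^-\subset\sigma_{Sv}^-$ into an equality $T\sigma_v^-=\sigma_{Sv}^-$ for $\mu$-a.e.\ $v$ (one interval sitting inside another of the same length). Exactly as in Lemma~\ref{lemka}, this says that $T$ maps the strips bijectively, hence the backward code is determined by the forward code $\mu$-a.e.; since $\sigma_v^-$ depends only on $(v_i)_{i\ge0}$, two words $v,w$ with $w_i=v_i$ for $i\ge0$ satisfy $\sigma_w^-=\sigma_v^-$, and the determinacy of the past yields $\sigma_w=\sigma_v$, proving $(i)$.

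For part $(ii)$, I would first observe that the defining property of $X$ --- that the discontinuities in $Disc(v,int),Disc(v,a),Disc(v,b)$ lie in finitely many hyperplanes and surfaces $S(A_0,A_1,A_2)$ --- is invariant under $S$, because passing from $v$ to $Sv$ only advances the orbit by one reflection and hence translates the whole discontinuity configuration in the unfolding by one copy of $P$. Thus $X$ is $S$-invariant and, by ergodicity, $\mu(X)\in\{0,1\}$; the substantive case is $\mu(X)=1$, on which $N(\sigma_v^-)$ is finite. The key monotonicity is that the shift removes discontinuities: using the equality $T\sigma_v^-=\sigma_{Sv}^-$ from part $(i)$, the endpoints of $\sigma_{Sv}^-$ are the $T$-images of those of $\sigma_v^-$, and the forward orbit of such an endpoint under $Sv$ is the forward orbit under $v$ with its first step deleted. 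Hence, identifying configurations through the unfolding isometry, $Disc(Sv,\cdot)\subset Disc(v,\cdot)$, so the number of planes containing the discontinuities cannot increase:
$$N(\sigma_{Sv}^-)\le N(\sigma_v^-).$$

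Finally I would conclude with a level-set argument rather than through integrability. For each integer $c$ the set $\{v:N(\sigma_v^-)\le c\}$ is forward invariant by the displayed inequality, hence $S$-invariant modulo $\mu$, so by ergodicity has measure $0$ or $1$. Since $N$ is finite $\mu$-a.e.\ on $X$, these measures increase to $1$, so there is a least value $N$ with $\mu\{N(\sigma_v^-)\le N\}=1$ and $\mu\{N(\sigma_v^-)\le N-1\}=0$; thus $N(\sigma_v^-)=N$ for $\mu$-a.e.\ $v$, proving $(ii)$. The main obstacle is the geometric bookkeeping behind the inequality $N(\sigma_{Sv}^-)\le N(\sigma_v^-)$: one must check that advancing the orbit by one reflection genuinely deletes discontinuities rather than creating new ones (for instance through an endpoint $a$ or $b$ becoming interior, or in the degenerate point case), and that the transition between the point case and the interval case for $\sigma_v^-$ does not spoil the monotonicity. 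This is the genuinely three-dimensional input, and it is where the structure established in Lemmas~\ref{eqdim3} and \ref{entro3inde} is needed to guarantee the finiteness of $N$ on $X$ in the first place.
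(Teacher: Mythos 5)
Your proposal is correct and follows essentially the same route as the paper: both parts are proved by exhibiting $L$ and $N$ as monotone functions along the shift and invoking ergodicity of $\mu$, with the constancy of $L$ upgrading the inclusion $T\sigma_v^-\subset\sigma_{Sv}^-$ to an equality and hence forcing $\sigma_w=\sigma_v$ for words agreeing in the future (the paper phrases this last step as $\sigma_v=\sigma_v^-$, you phrase it as determinacy of the past; these are equivalent). The only discrepancy is the direction of the monotonicity in $(ii)$ --- the paper writes $N(\sigma_v^-)\le N(\sigma_{Sv}^-)$ while you derive $N(\sigma_{Sv}^-)\le N(\sigma_v^-)$ from $Disc(Sv,\cdot)\subset Disc(v,\cdot)$ --- but your direction is the one actually justified by the geometry, and either monotonicity yields constancy via your level-set argument, so nothing is lost.
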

\begin{proof}
$(i)$ If $\sigma_v^-$ is a point then there is nothing to show.
Let $L(\sigma_v^-)$ be as before.  We have $L(\sigma_v^-)\leq
L(\sigma_{S(v)}^-)$.  Since $S$ is ergodic, $L$ is constant almost
everywhere. Thus $L(\sigma_v) = L(\sigma_v^-)$ thus
$\sigma_v=\sigma_v^-$. The same holds for $w$, thus since
$\sigma_w^-=\sigma_v^-$ we have $\sigma_v= \sigma_w$.

$(ii)$ We have $N(\sigma_v^-)\leq N(\sigma_{Sv}^-)$, thus the lemma follows since $S$ is ergodic.
\end{proof}
Let $D$ stand for $Disc(v,int),Disc(v,a),\text{or}\quad Disc(v,b)$.
\begin{remark}\label{entrorem}
For two sets $A_i,A_j\in D$ the relation $dim Aff(A_i,A_j)=2$ is a transitive relation. Indeed consider three sets $A_i,A_j,A_k$ such that
$A_i\sim A_j$, and $A_j\sim A_k$. Since the line $m+\mathbb{R}\theta$ passes through $A_i,A_j,A_k$, we deduce $A_i\sim A_k$.
\end{remark}
Then we can show

\begin{proposition}\label{entroscruc}
 The set $V\setminus X$ is at most countable.
\end{proposition}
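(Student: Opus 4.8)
The plan is to show that every $v\in V\setminus X$ is rigidly determined by finitely many of the edges it meets, and that these edges are drawn from a countable pool, so that only countably many such $v$ can occur. I first record that the collection of all edges appearing in all unfoldings of $P$ is countable: the copies of $P$ in the unfolding are indexed by finite words in the reflections $s_i$, a countable set, and each copy contributes finitely many edges. Hence finite tuples of edges form a countable set, and it suffices to attach to each $v\in V\setminus X$ a finite tuple of edges, taken from $Disc(v,int),Disc(v,a),Disc(v,b)$, that determines $\sigma_v^-$ up to finitely many possibilities.

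Next I would fix $v\in V\setminus X$ with its (unique, by Lemma \ref{entro3gkt}) direction $\theta$. By Remark \ref{entrorem} the coplanarity relation $\dim Aff(A_i,A_j)=2$ is an equivalence relation on the discontinuity edges, and each class lies in a single plane (the plane through the orbit line associated with that class). If the edges were covered by finitely many such planes together with finitely many surfaces $S(A_0,A_1,A_2)$ we would have $v\in X$; so $v\notin X$ forces at least three distinct coplanarity classes, from which I select pairwise non-coplanar representatives $A_0,A_1,A_2$. These are transversal, so Lemma \ref{entro3eq}(ii) gives $m=F_i(\theta)$. Since the finite union of the three class-planes and the surface $S(A_0,A_1,A_2)$ cannot contain every discontinuity edge (else $v\in X$), there is an edge $A_3$ lying off all of them; it is non-coplanar with $A_0,A_1,A_2$ and $A_3\notin S(A_0,A_1,A_2)$. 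By Corollary \ref{entro3inde} the maps $F_1-F_2$ and $F_1-F_3$ are linearly independent. Writing $F_i(\theta)=\frac{\langle p_i\wedge x_i,\theta\rangle}{\langle u\wedge x_i,\theta\rangle}$ as in Lemma \ref{entrogeom}, the equations $(F_1-F_2)(\theta)=0$ and $(F_1-F_3)(\theta)=0$ are two non-proportional conics in the two-dimensional projective space $\mathbb{PR}^3$, hence meet in a finite set (at most four points). Thus the tuple $(A_0,A_1,A_2,A_3)$ determines $\theta$ up to finitely many values, and ranging over the countable set of such tuples the directions $\theta(v)$, $v\in V\setminus X$, lie in a countable set $\Theta_0$.

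I would then fix $\theta\in\Theta_0$ and pin down the base point. If $\sigma_v^-$ is a point, then $m=F_1(\theta)$ is one of countably many values (one for each transversal edge $A_1$), so $\sigma_v^-$ lies in a countable set of points of $\overline{E_0}$. If $\sigma_v^-$ is an interval in a plane $\Pi$, then all of $Disc(v,int)$ lies in $\Pi$, and since $v\notin X$ the edges not covered by finitely many planes and surfaces must come from the endpoint sets $Disc(v,a)\cup Disc(v,b)$; applying the same over-determination argument at an endpoint, now with $\theta$ already fixed, shows that at least one endpoint equals some $F_i(\theta)$ and hence lies in a countable set. In either case I attach to $v$ a distinguished point $q_v\in\sigma_v^-\subset\overline{E_0}$ lying in a fixed countable set $Q$. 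Finally, by Lemma \ref{3Ka} each point of $\overline{E_0}$ belongs to $\sigma_w^-$ for at most countably many words $w$, so $V\setminus X\subseteq\bigcup_{q\in Q}\{\,w:q\in\sigma_w^-\,\}$ is a countable union of countable sets, hence countable.

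The delicate step is the extraction of the four edges in general position, and especially the interval case: there the interior discontinuities are automatically coplanar, so one must argue that the failure $v\notin X$ genuinely forces transversal, off-surface edges among the endpoint discontinuities rather than merely more coplanar ones. The other point needing care is that linear independence of $F_1-F_2$ and $F_1-F_3$ really yields a zero-dimensional solution set for $\theta$; this requires checking that the two conics share no common component, so that the Bézout bound applies and the intersection is genuinely finite.
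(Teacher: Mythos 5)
Your proof is correct and follows essentially the same route as the paper's: the same dichotomy between the all-coplanar case (where Remark \ref{entrorem} puts the discontinuities into finitely many hyperplanes, so $v\in X$) and the case of three transversal equations $m=F_i(\theta)$, the same use of Corollary \ref{entro3inde} to confine $\theta$ to a countable set drawn from the countable pool of unfolded edges, and the same appeal to Lemma \ref{3Ka} to pass from countably many points $(m,\theta)$ to countably many words. Your refinements --- deducing only \emph{finiteness} of the solution set via B\'ezout where the paper asserts uniqueness from ``two independent equations'', and the explicit endpoint analysis in the interval case --- merely tighten steps the paper glosses over, and do not change the argument.
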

 \begin{proof}
 Let $v\in V$. Lemma \ref{entro3eq} implies that we have for each pair of discontinuities an equation $m=F(\theta)$ or $f(\theta)=0$.
 Denote the set $D$ by $A_0,\dots,A_n,\dots$.
 Either there exist discontinuities $A_{i_0},A_{i_1},A_{i_2},A_{i_3}$, such that the equations related to
 $(A_{i_0},A_{i_j})$, for all $j\leq 3$, are of the form
 $m=F(\theta)$ or not. In the following we will assume, for simplicity, that these
 three discontinuities (if they exist) are denoted by $A_0,A_1,A_2,A_3$.

$\bullet$ First assume it is not the case. Then for any subset of
$D\setminus\{A_0,A_1,A_2,A_3\}$ two elements give equations of the
form $f(\theta)=0$. By Remark \ref{entrorem} all the
discontinuities in the set $D\setminus\{A_0,A_1,A_2,A_3\}$ are in
a single hyperplane. Thus all the discontinuities of $D$ are in a
finite union of hyperplanes. We do the same thing for $Disc(v,a)$
and $Disc(v,b)$. We conclude $v\in X$.

$\bullet$ Now we treat the case where we obtain at least three equations of the form $m=F(\theta)$ for some choice of $(m,\theta)$.

Corollary \ref{entro3inde} shows that two such equations are
different since the discontinuities are not in the union of
surfaces. Thus consider the  three first equations
$m=F(\theta)=G(\theta)=H(\theta)$. It gives two equations
$(F-G)(\theta)=(F-H)(\theta)=0$.  Those two equations are
different by Corollary \ref{entro3inde}, since $F,G,H$ are
different. We deduce that the direction $\theta$ is solution of a
system of two independant equations, thus it is unique. We remark
that the vertices which appear in unfolding have their coordinates
in a countable set $\mathcal{C}$. Indeed we start from a finite
number of points corresponding to the vertices and at each step of
the unfolding we reflect them over some faces of $P$. Thus at each
step there are a finite set of vertices. Moreover the coefficients
of the edges are obtained by difference of coordinates of
vertices. By the same argument the coefficients of cartesian
equations of the hyperplanes which contains faces live in a
countable set $\mathcal{C}$. There are only a countable collection
of functions $m=F(\theta)$ which arise. Thus the solution $\theta$
corresponding to the equations $m=F(\theta)=G(\theta)=H(\theta)$
lives in a countable set. It determines $(m,\theta)$. The number
of words associated to the orbit of $(m,\theta)$ is countable by
Lemma \ref{3Ka}. Thus the set of such words is countable.
\end{proof}

\begin{figure}[t]
\begin{center}
\includegraphics[width= 4cm]{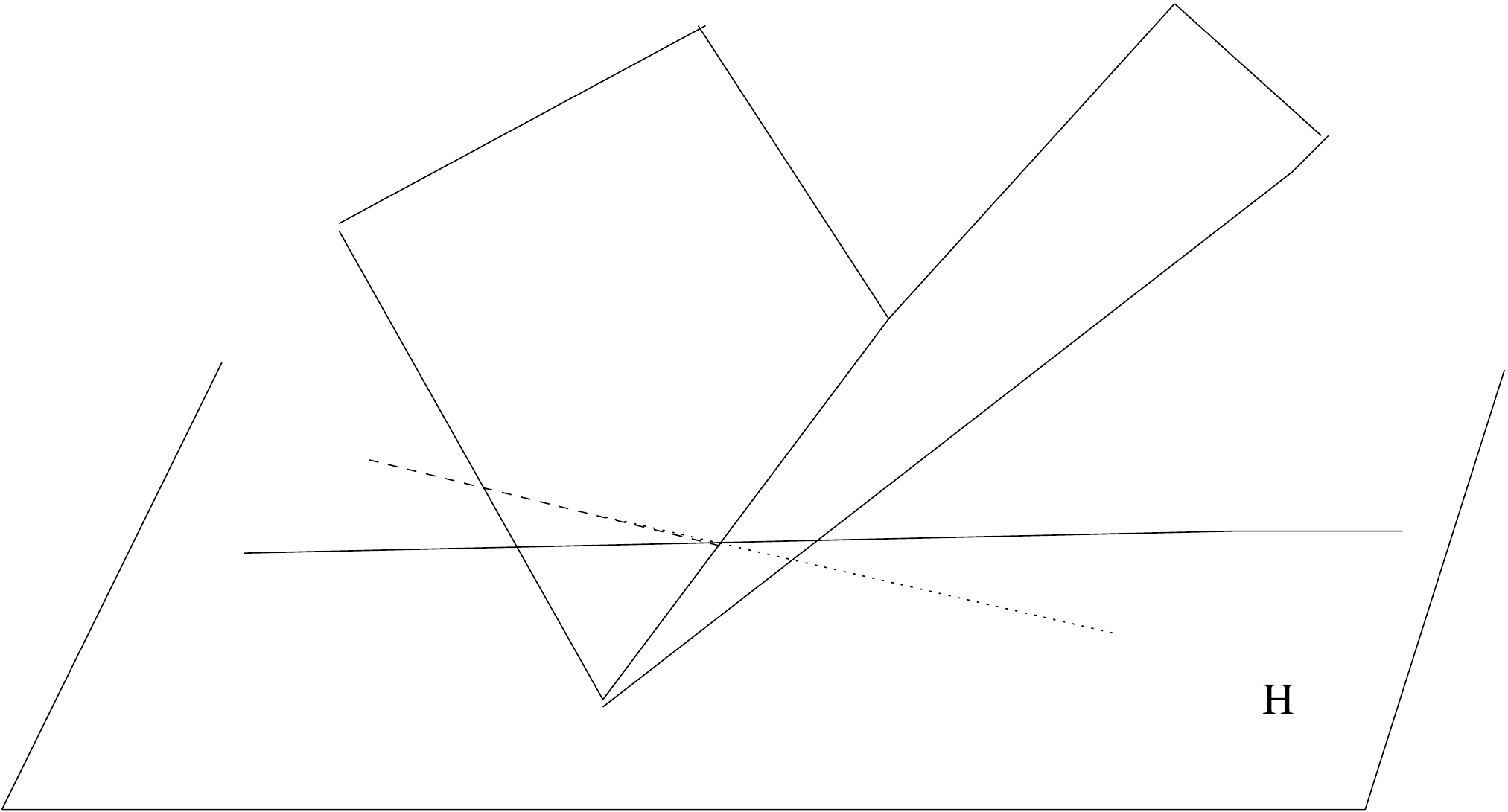}
\caption{Coding of a word}
\label{entrofighyper}
\end{center}
\end{figure}
\section{Proof of Theorem \ref{entro}}

\begin{lemma}\label{3entrosur}
Suppose that $\mu$ is an ergodic measure supported in $\Sigma \backslash \phi(E_0)$ such that $\mu(X) = 1$. Then
$h_{\mu}(S) = 0$.
\end{lemma}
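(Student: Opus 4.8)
The plan is to follow the strategy of Lemma \ref{lemka} and reduce everything to a single vanishing conditional entropy. Since $\xi$ is a topological generator (Lemma \ref{entrodef}), the Kolmogorov--Sinai formula gives $h_\mu(S)=h_\mu(S,\xi)=H_\mu(\xi\mid\bigvee_{n\ge 1}S^{-n}\xi)$, and by the $S$-invariance of $\mu$ this equals $H_\mu(S\xi\mid\bigvee_{n\ge 0}S^{-n}\xi)=H(v_{-1}\mid v_0,v_1,\dots)$. Hence it suffices to prove that, for $\mu$-a.e.\ $v$, the coordinate $v_{-1}$ is determined by the forward coordinates $(v_0,v_1,\dots)$. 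The entire difficulty is to control the non-injectivity of the coding on the boundary $\Sigma\setminus\phi(E_0)$.

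First I would record the geometric picture furnished by the hypotheses. By Lemma \ref{entroprobpaq}$(i)$ there is a width $L$ with $L(\sigma_v^-)=L$ and $\sigma_v=\sigma_v^-$ for $\mu$-a.e.\ $v$, so for such $v$ the strip is genuinely determined by $(v_0,v_1,\dots)$ and $T\sigma_v=\sigma_{Sv}$. By Lemma \ref{entro3gkt} each $\sigma_v^-$ is a connected strip lying in a single face. Pulling back once, $T^{-1}\sigma_v$ is again a strip whose interior lies in a single face, and that face is exactly $v_{-1}$ \emph{provided the pullback does not straddle an edge of the unfolded polyhedron}. Thus the only mechanism that can make $v_{-1}$ ambiguous given $(v_0,v_1,\dots)$ is a transversal crossing of a singular edge by the backward strip; the tangential singularities and the strip endpoints $a,b$ recorded in $Disc(v,int),Disc(v,a),Disc(v,b)$, which are what actually place $v$ outside $\phi(E_0)$, do not by themselves branch the present letter.

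Next I would bound these transversal crossings. In the unfolding the orbit is a single straight line, and since $\mu(X)=1$, for $\mu$-a.e.\ $v$ all discontinuities met lie in the finitely many hyperplanes and surfaces $S(A_0,A_1,A_2)$ counted by $N(\sigma_v^-)$, which by Lemma \ref{entroprobpaq}$(ii)$ is a.e.\ equal to a constant $N$. A line meets each such hyperplane at most once, and by Lemma \ref{eqdim3}$(ii)$ meets each surface $S(A_0,A_1,A_2)$ at most four times; hence the whole bi-infinite orbit undergoes at most $4N$ transversal singular crossings. Consequently, for $\mu$-a.e.\ $v$, the coordinate $v_{-1}$ fails to be determined by $(v_0,v_1,\dots)$ at only finitely many (at most $4N$) positions along the orbit.

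The final step, which I expect to be the main obstacle, is to upgrade ``finitely many exceptional times'' to ``a set of times of measure zero,'' and for this I would use invariance of $\mu$. Let $B$ be the measurable event that position $0$ is one of the ambiguous positions, i.e.\ that $T^{-1}\sigma_v$ straddles an edge. The number of ambiguous positions in a window $[0,n)$ is $\sum_{k=0}^{n-1}\mathbb{1}_B(S^k v)$, which is $\mu$-a.e.\ at most $4N$, so its expectation $n\,\mu(B)$ satisfies $n\,\mu(B)\le 4N$ for every $n$, forcing $\mu(B)=0$. Therefore $v_{-1}$ is $\mu$-a.e.\ determined by $(v_0,v_1,\dots)$, the conditional entropy vanishes, and $h_\mu(S)=0$. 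The genuinely delicate part is the geometric claim of the second paragraph: proving that ambiguity of the present letter corresponds precisely to the orbit-line crossing one of the $N$ surfaces or hyperplanes transversally, and carefully separating this from the grazing contributions in $Disc(v,int)$ and from the endpoint contributions $Disc(v,a),Disc(v,b)$, is where the real work lies.
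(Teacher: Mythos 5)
Your reduction is legitimate as far as it goes: $\xi$ is the partition into $0$-cylinders, a generator, so $h_\mu(S)=H(v_{-1}\mid v_0,v_1,\dots)$, and showing $v_{-1}$ is a.e.\ determined by the future would indeed finish (note this statement is exactly \emph{equivalent} to $h_\mu=0$, so nothing is lost but nothing is gained either). The fatal gap is the crossing count in your third paragraph, at precisely the spot you flagged as ``the real work.'' Your bound of at most $4N$ transversal singular crossings along the whole bi-infinite orbit misapplies Lemma \ref{eqdim3}$(ii)$: that lemma bounds intersections only for lines \emph{not contained} in the surface, and similarly a line meets a hyperplane once only if it is transverse to it. But for $v\in X$ the typical situation---the first alternative in the proof of Proposition \ref{entroscruc}, arising from case $(i)$ of Lemma \ref{entro3eq} with equations $f(\theta)=0$---is that the orbit line \emph{lies inside} a hyperplane containing the discontinuities. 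A concrete instance is a limit word of orbits converging to a planar trajectory in a face: the discontinuities are the edges of that face, all in one plane, met infinitely often, and the unfolded orbit crosses discontinuity lines within that plane unboundedly many times. So your event $B$ need not occur at most $4N$ times per orbit, the Birkhoff-sum argument collapses, and $\mu(B)=0$ does not follow. Relatedly, your dismissal of the tangential singularities and of $Disc(v,a),Disc(v,b)$ as non-branching is unjustified and essentially backwards: since $L$ is a.e.\ constant (Lemma \ref{entroprobpaq}$(i)$), a width-reducing transversal straddle of the backward strip is actually ruled out a.e., and it is exactly the grazing encounters---edges lying in the plane of the strip or met by its endpoint orbits---that produce distinct limit words sharing a future.

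The paper avoids this trap by never claiming a unique past. Its proof shows only a \emph{polynomial} bound: since $\mu(X)=1$ and $N(\sigma_v^-)=N$ a.e., all discontinuities lie in $N$ fixed planes; each approximating trajectory in $\phi(E_0)$ cuts each such plane in a single point, and the position of that point relative to the discontinuity lines inside the planes determines the entire backward unfolding, hence the backward code. This gives $O(n^N)$ admissible pasts $v_{-n},\dots,v_{-1}$ for a given future, whence $nh_\mu(S)=H(v_{-n},\dots,v_{-1}\mid v_0,v_1,\dots)\leq \log(Cn^N)$ and $h_\mu(S)=0$. That mechanism tolerates infinitely many branch events along the orbit (their number in a window of length $n$ may grow like $\log n$), which is exactly the regime your finite-crossing count cannot accommodate. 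To repair your argument you would have to replace the ``at most $4N$ ambiguous positions'' claim by the paper's shadowing-by-a-single-point argument, at which point you have reproduced the paper's proof.
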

\begin{proof}
By Lemma \ref{entroprobpaq} we can assume there is a constant $L \ge 0$ such that
$L(\sigma_v^-) = L$. Suppose first that $L > 0$.
Suppose $v \in \hbox{support}(\mu)$.
This implies that $Disc(v,int)$ is contained in a single plane.
If $w \in \hbox{support}(\mu)$ satisfies $w_i = v_i$ for $i \ge 0$
then $Disc(w,int)$ is contained in the same plane.  Each trajectory
in $\phi(E_0)$ which approximates the future of $v$ cuts this plane
in a single point.  Consider these sequence of approximating trajectories
which converges to $(m,\theta)$. The limit of these trajectories cuts
the surface at one (or zero) points.
The point where it cuts the surface determines
the backwards unfolding, and thus the backwards code.  Thus if we ignore
for the moment the boundary discontinuities the
knowing the future $v_0,v_1,v_2,\dots$ determines $O(n)$ choices of
the past $v_{-n},\dots,v_{-1}$.

The boundary discontinuities and the case $L(\sigma_v^-) = 0$ are
treated analogously.  Let $(m,\theta) = \sigma_v^-$ (or one of the
boundary points of $\sigma_v^-$ in the case above). By Lemma
\ref{entroprobpaq} we can assume that $Disc(v,m)$ is contained in
$N$ planes, and that if $w \in \hbox{support}(\mu)$ satisfies $w_i
= v_i$ for $i \ge 0$ then $Disc(w,int)$ is contained in the same
planes. Arguing as above, the point where an approximating orbit
cuts these planes determines the past.  Thus the future
$v_0,v_1,v_2,\dots$ determines $O(n^N)$ choices of the past
$v_{-n},\dots,v_{-1}$. Since
$\displaystyle\lim_{n\rightarrow+\infty}\frac{\log{n^N}}{n}=0$ we
deduce the result.
\end{proof}

The preceding lemma and proposition allow to conclude
\begin{corollary}\label{supmes2}
Let $\mu$ an ergodic measure with support in $\Sigma\setminus\phi(E_0)$, then
$$h_{\mu}(S)=0.$$
\end{corollary}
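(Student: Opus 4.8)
The plan is to prove the corollary by a dichotomy on the value of $\mu(X)$, combining the two results just established: Lemma \ref{3entrosur}, which settles the case where $\mu$ charges $X$ fully, and Proposition \ref{entroscruc}, which controls the complementary set $V\setminus X$, where $V=\Sigma\setminus\phi(E_0)$. Since by hypothesis $supp(\mu)\subset V$, we have $\mu(V)=1$, and the inclusion $X\subset V$ splits the full-measure set $V$ into $X$ and the at most countable remainder $V\setminus X$. Note that this dichotomy is valid without knowing whether $X$ is $S$-invariant, since $\mu(X)=1$ and $\mu(X)<1$ are mutually exclusive and exhaustive.

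First I would treat the case $\mu(X)=1$. Here there is nothing new to establish: Lemma \ref{3entrosur} applies directly and yields $h_{\mu}(S)=0$.

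It remains to handle the case $\mu(X)<1$, that is $\mu(V\setminus X)>0$. The key input is that $V\setminus X$ is at most countable by Proposition \ref{entroscruc}. A probability measure assigning positive mass to a countable set must possess an atom, so there is a word $v_0$ with $\mu(\{v_0\})>0$. Since $\mu$ is $S$-invariant, the shift sends atoms to atoms of equal mass; hence the orbit $\bigcup_{n\in\mathbb{Z}}S^n\{v_0\}$ is a countable $S$-invariant set of positive measure, which by ergodicity has full measure. An ergodic measure supported on a countable invariant set is the equidistribution on a single periodic orbit (the finite set of atoms of maximal mass is invariant and of positive, hence full, measure, and ergodicity forces it to be one periodic orbit). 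Thus $\mu$ is atomic and its associated system is periodic, so, as already observed in Section 3, $h_{\mu}(S)=0$.

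The only delicate step is the second case, and within it the passage from \emph{positive measure on a countable set} to \emph{supported on a single periodic orbit}. This rests entirely on ergodicity: one must observe that the $S$-saturation of the atom remains countable and invariant, so it is a finite union of orbits of which exactly one, necessarily periodic, is charged. Once this is granted, the entropy vanishes for the elementary reason that a periodic system has zero complexity, which completes the case analysis and hence the corollary.
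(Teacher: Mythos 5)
Your proof is correct and follows essentially the same route as the paper, which simply states that the corollary ``follows immediately'' from Lemma \ref{3entrosur} and Proposition \ref{entroscruc}: the intended reading is precisely your dichotomy on $\mu(X)$. The atomic case you spell out (positive mass on the countable set $V\setminus X$ forces an atom, hence by ergodicity and invertibility of $S$ an equidistributed periodic orbit of zero entropy) is left implicit in the paper, which had already dismissed atomic ergodic measures for this reason at the start of Section 3, so your write-up supplies a routine but genuinely omitted detail rather than a new argument.
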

\begin{proof}
This follows immediately from Lemma \ref{3entrosur} and Proposition \ref{entroscruc}.
\end{proof}
Lemma \ref{entrodef} reduces the problem to the computation of $h_{top}(S|\Sigma)$.
Moreover we have
$$h_{top}(S|\Sigma)=\sup_{\substack{\mu \\ ergo, \\
supp(\mu)\subset\phi(E_0)}}h_{\mu}(S|\Sigma)+\sup_{\substack{\mu \\ ergo, \\
supp(\mu)\subset\Sigma\setminus\phi(E_0)}}h_{\mu}(S|\Sigma),$$
then Corollaries \ref{supmes1} and \ref{supmes2} imply:
$$h_{top}(S|\Sigma)=0.$$

\bibliographystyle{alpha}
\bibliography{bibio}
\end{document}